\newtheorem{Theorem}{Theorem}[section]
\newtheorem{Proposition}[Theorem]{Proposition}
\newtheorem{Definition}[Theorem]{Definition}
\newtheorem{Remark}[Theorem]{Remark}
\newtheorem{Example}[Theorem]{Example}
\newtheorem{Condition}[Theorem]{Condition}
\newtheorem{Text}[Theorem]{}
\newcommand{\cA}{{\mathcal A}}
\newcommand{\cB}{{\mathcal B}}
\newcommand{\cC}{{\mathcal C}}
\newcommand{\cD}{{\mathcal D}}
\newcommand{\cF}{{\mathcal F}}
\newcommand{\cG}{{\mathcal G}}
\newcommand{\cK}{{\mathcal K}}
\newcommand{\cM}{{\mathcal M}}
\newcommand{\cN}{{\mathcal N}}
\newcommand{\cU}{{\mathcal U}}
\newcommand\Arr{\mathbf{Arr}}
\newcommand\Grpd{\mathbf{Grpd}}
\newcommand\RG{\mathbf{RG}}
\newcommand\Ker{\mathrm{Ker}}
\newcommand\id{\mathrm{id}}
\newcommand\Colim{\mathrm{Colim}}
\newcommand\HC{\mathrm{HoCok}}
\begin{document}

\title{Completion under strong homotopy cokernels}

\author{Enrico M. Vitale
\footnote{Institut de recherche en math\'ematique et physique, Universit\'e catholique de Louvain,
Chemin du Cyclotron 2, B 1348 Louvain-la-Neuve, Belgique, enrico.vitale@uclouvain.be}} 

\maketitle

\noindent {\bf Abstract:}
\noindent For $\cA$ a category with finite colimits, we show that the embedding of $\cA$ into the category of arrows $\Arr(\cA)$
determined by the initial object is the completion of $\cA$ under strong homotopy cokernels. The nullhomotopy structure
of $\Arr(\cA)$ (needed in order to express the notion of homotopy cokernel) is the usual one induced by the canonical string of 
adjunctions between $\cA$ and $\Arr(\cA).$  
\\ \\
\noindent{\it Keywords:} nullhomotopy, homotopy cokernel, arrow category, completion. \\
{\it 2020 MSC:} 18A30, 18A35, 18N99

\tableofcontents

\section{Introduction}\label{SecIntro}

Limits and colimits are a fundamental tool in category theory and its applications. However, these notions are not completely 
satisfactory in homotopical algebra, and the search for a convenient notion of homotopy limit is a long story, see for example
\cite{BK,Ma,TH,DF,DHKS}. 

More recently, (strong) homotopy kernels and (strong) homotopy cokernels in the context of categories equipped with a structure 
of nullhomotopies have been used in \cite{SnailEV,JMMV1,JMMV2,MGMV} in order to internalize Gabriel-Zisman \cite{GZ} and 
Brown \cite{BR} exact sequences, and in \cite{MMMV} to define a general notion of homotopy torsion theory.

The aim of the present paper is to exhibit the free completion of a category $\cA$ under strong homotopy cokernels. For this, we consider the
category $\Arr(\cA)$ of arrows in $\cA.$ The canonical embedding of $\cA$ in $\Arr(\cA)$ freely adds a factorization system to $\cA$
(see \cite{KT} and also \cite{GR00, RV}). If we assume that $\cA$ has an initial object $\emptyset,$ we can consider another embedding
given by the functor $\Gamma \colon \cA \to \Arr(\cA)$ which sends an object $X$ on the unique arrow $\emptyset \to X.$ We prove that, if $\cA$ 
has finite colimits and if we put on $\Arr(\cA)$ the structure of nullhomotopies induced by the canonical string of adjunctions between $\cA$ and 
$\Arr(\cA),$ then the functor $\Gamma$ is the free completion of $\cA$ under strong homotopy cokernels. If $\cA$ is finitely complete, by duality 
we get the free completion of $\cA$ under strong homotopy kernels.

The layout of the paper is as follows. In Section \ref{SecCatNull}, we recall the definition of category with nullhomotopies and we complete it 
with the appropriate notions of morphism and 2-morphism. We introduce also the examples relevant for this paper. More examples can be 
found in \cite{MMMV,DVfree}. Section \ref{SecHCok} is devoted to homotopy cokernels and to the behavior of colimits with respect to 
nullhomotopies. A particular attention is payed to the category $\Arr(\cA).$  Part of the material in Sections \ref{SecCatNull} and \ref{SecHCok}
is borrowed from the companion paper \cite{MMMV}. In Section \ref{SecUniv1}, we state in a precise way and prove the universality of the full 
embedding $\Gamma \colon \cA \to \Arr(\cA)$ mentioned above. In the last section, we discuss the denormalization functor from the point of 
view of the universal property of $\Arr(\cA).$
\hfill

\noindent N.B.: The composition of two arrows $\xymatrix{A \ar[r]^-{f} & B \ar[r]^-{g} & C}$ will be written as $f \cdot g.$

\section{Categories with nullhomotopies}\label{SecCatNull}

In this section, we fix the terminology and some basic facts concerning nullhomotopies. As far as I know, the notion of category with
a structure of nullhomotopies has been introduced in \cite{GR97}. I follow here the version, a bit stronger, adopted in \cite{SnailEV,JMMV1,MMMV}.

\begin{Definition}\label{DefNullHom}{\rm
A structure of nullhomotopies $\Theta$ on a category $\cB$ is given by the following data:
\begin{enumerate}
\item[1)] For every arrow $g$ in $\cB,$ a set $\Theta(g)$ whose elements are called nullhomotopies on $g.$
\item[2)] For every triple of composable arrows $\xymatrix{A \ar[r]^f & B \ar[r]^g & C \ar[r]^h & D},$ a map 
$$f \circ - \circ h \colon \Theta(g) \to \Theta(f \cdot g \cdot h)$$
in such a way that, for every $\varphi \in \Theta(g),$ one has
\begin{enumerate}
\item $(f' \cdot f) \circ \varphi \circ (h \cdot h') = f' \circ (f \circ \varphi \circ h) \circ h'$ whenever the compositions $f' \cdot f$ and $h \cdot h'$ 
are defined,
\item $\id_B \circ \varphi \circ \id_C = \varphi.$
\end{enumerate}
\end{enumerate}
When $f=\id_B$ or $h=\id_C,$ we write $\varphi \circ h$ and $f \circ \varphi$ instead of $\id_B \circ \varphi \circ h$ 
and $f \circ \varphi \circ \id_C.$
}\end{Definition}

\begin{Example}\label{ExNullHom}{\rm
In this paper, the relevant examples of structures of nullhomotopies are the first and the second example hereunder (and the dual of the 
first one). The third example is added in order to make clear in which sense a category with a structure of nullhomotopies can be seen 
as an intermediate notion between that of category and that of 2-category. Some examples having a 2-categorical flavor are discussed 
in \cite{DVfree}, where the quite involved passage from nullhomotopies to 2-cells in a 2-category is analyzed. Other examples are 
considered in \cite{MMMV}, where structures of nullhomotopies are obtained from generalized pre-(co)radicals, and where the link between 
structures of nullhomotopies and ideals of arrows is explained.
\begin{enumerate}
\item Let $\cA$ be a category with an initial object $\emptyset$ and write $\emptyset_C \colon \emptyset \to C$ for the unique arrow. 
We get a structure of nullhomotopies $\Theta_{\emptyset}$ on $\cA$ by taking as set of nullhomotopies on an arrow $g\colon B \to C$ the set
$$\Theta_{\emptyset}(g) = \{ \varphi \colon B \to \emptyset \mid \varphi \cdot \emptyset_C = g \}$$
Given arrows $f \colon A \to B$ and $h \colon C \to D,$ we put $f \circ \varphi \circ h = f \cdot \varphi$ for all $\varphi \in \Theta_{\emptyset}(g).$
\item Recall that, given a category $\cA,$ the category $\Arr(\cA)$ has as objects the arrows $b \colon B \to B_0$ of $\cA$ and as arrows pairs of 
arrows $(g,g_0)$ in $\cA$ such that
$$\xymatrix{B \ar[r]^{g} \ar[d]_{b} & C \ar[d]^{c} \\
B_0 \ar[r]_{g_0} & C_0}$$
commutes. As set of nullhomotopies $\Theta_{\Delta}(g,g_0)$ we take the set of diagonals:
$$\Theta_{\Delta}(g,g_0) = \{ \varphi \colon B_0 \to C \mid b \cdot \varphi = g, \; \varphi \cdot c = g_0 \}$$
In the situation of the following diagram
$$\xymatrix{A \ar[r]^{f} \ar[d]_{a} & B \ar[r]^{g} \ar[d]_{b} & C  \ar[r]^{h} \ar[d]^{c} & D \ar[d]^{d} \\
A_0 \ar[r]_{f_0} & B_0 \ar[r]_{g_0} \ar[ru]^{\varphi} & C_0 \ar[r]_{h_0} & D_0}$$
the composition is given by the formula 
$$(f,f_0) \circ \varphi \circ (h,h_0) = f_0 \cdot \varphi \cdot h$$
In \cite{MMMV}, it is shown that the structure $\Theta_{\Delta}$ on $\Arr(\cA)$ is the one induced by the string of adjunctions
$$\xymatrix{ \cA \ar[rr]|-{\cU} & & \Arr(\cA) \ar@<-1.5ex>[ll]_-{\cC} \ar@<1.5ex>[ll]^-{\cD} }
\;\;\;\;\; \cC \dashv \cU \dashv \cD$$
where $\cC$ is the codomain finctor, $\cD$ is the domain functor and $\cU$ is the full and faithful functor which sends an object 
$X$ on the identity arrow $\id_X.$
\item If the underlying category of a 2-category $\cB$ has zero object, then $\cB$ can be seen as a category with nullhomotopies 
by taking as nullhomotopies the 2-cells with domain a zero arrow (or the 2-cells with codomain a zero arrow). 
A relevant example which fits into this situation is discussed in Section \ref{SecDenorm}.
\end{enumerate}
}\end{Example}

\begin{Text}\label{TextNotNull}{\rm
The last item of Example \ref{ExNullHom} justifies the fact that, in a category with nullhomotopies $(\cB,\Theta),$ when a nullhomotopy 
$\varphi \in \Theta(g)$ is involved in a diagram, it will be depicted as
$$\xymatrix{B \ar@/^1.0pc/[rr]^{g} \ar@{-->}@/_1.0pc/[rr]_{0} & \Uparrow \varphi & C}$$
even if the category $\cB$ does not have zero arrows. For example, here there are the two ways to depict a nullhomotopy
$\varphi \in \Theta_{\Delta}(g,g_0)$ in $\Arr(\cA) \colon$
$$\xymatrix{B \ar[r]^{g} \ar[d]_{b} & C \ar[d]^{c} \\ 
B_0 \ar[ru]^{\varphi} \ar[r]_{g_0} & C_0} 
\;\;\;\;\; \mbox{ or } \;\;\;\;\;
\xymatrix{(B,b,B_0) \ar@/^1pc/[rr]^{(g,g_0)} \ar@{-->}@/_1pc/[rr]_{0} & \Uparrow \; \varphi & (C,c,C_0)}$$
}\end{Text}

\begin{Definition}\label{DefFunctNullHom}{\rm
(The 2-category of categories with nullhomotopies)
Let $(\cA,\Theta_{\cA})$ and $(\cB,\Theta_{\cB})$ be two categories with nullhomotopies.
\begin{enumerate}
\item[1)] A morphism $\cF \colon (\cA,\Theta_{\cA}) \to (\cB,\Theta_{\cB})$ is a functor $\cF \colon \cA \to \cB$ equipped, for every arrow 
$g \colon B \to C$ in  $\cA,$ with a map 
$$\cF_g \colon \Theta_{\cA}(g) \to \Theta_{\cal B}(\cF(g))$$
such that $\cF_{f \cdot g \cdot h}(f \circ \varphi \circ h) = \cF(f) \circ \cF_g(\varphi) \circ \cF(h)$ for all $f \colon A \to B$ and $h \colon C \to D.$ \\
\item[2)] If $\cG \colon (\cA,\Theta_{\cA}) \to (\cB,\Theta_{\cB})$ is another morphism, a 2-morphism $\alpha \colon \cF \Rightarrow \cG$ is a 
natural transformation such that, for every $g \colon B \to C$ in $\cA$ and for every $\varphi \in \Theta_{\cA}(g),$ one has 
$\alpha_B \circ \cG_g(\varphi) = \cF_g(\varphi) \circ \alpha_C.$
\end{enumerate}
(I will always omit the suffix $g$ in the map $\cF_{g}$ with the only exception of point 2) in the proof of Proposition \ref{PropExt}.)
}\end{Definition} 

\begin{Remark}\label{Rem2Cat}{\rm
Since morphisms compose as functors and since 2-morphisms compose vertically and horizontally as natural transformations, categories with 
nullhomotopies together with their morphisms and 2-morphisms form a 2-category. Observe also that, if a 2-morphism is invertible as a natural 
transformation, then the inverse natural transformation is automatically a 2-morphism. 
}\end{Remark}

\begin{Example}\label{ExFunctNullHom}{\rm 
If $\cA$ is a category with an initial object $\emptyset,$ we get a morphism of categories with nullhomotopies 
$\Gamma \colon (\cA,\Theta_{\emptyset}) \to (\Arr(\cA),\Theta_{\Delta})$ defined on objects, arrows and nullhomotopies by  
$$\xymatrix{ & \emptyset \ar[d] \\ B_0 \ar[ru]^{\varphi} \ar[r]_{g_0} & C_0} \;\; \mapsto \;\; 
\xymatrix{\emptyset \ar[r] \ar[d] & \emptyset \ar[d] \\ B_0 \ar[r]_{g_0} \ar[ru]^{\varphi} & C_0}$$
 The functor $\Gamma$ is full and faithful. Moreover, for every arrow $g_0 \colon B_0 \to C_0,$ the map 
 $\Gamma_{g_0} \colon \Theta_{\emptyset}(g_0) \to \Theta_{\Delta}(\Gamma(g_0))$ is bijective.
}\end{Example}

\begin{Condition}\label{CondRedInter}{\rm
Here we recall a condition crucial in this paper, but which is not always satisfied by a category with nullhomotopies. It has been isolated in \cite{GR01}
under the name of reduced interchange. We say that the reduced interchange holds in a category with nullhomotopies $(\cB,\Theta)$ if, 
in the situation
$$\xymatrix{A \ar@/^1.0pc/[rr]^{f} \ar@{-->}@/_1.0pc/[rr]_{0} & \Uparrow \alpha & B \ar@/^1.0pc/[rr]^{g} \ar@{-->}@/_1.0pc/[rr]_{0} & \Uparrow \beta & C}$$
one has that $\alpha \circ g = f \circ \beta.$
}\end{Condition}

\begin{Example}\label{ExRedInter}{\rm 
The reduced interchange holds in the examples of categories with nullhomotopies needed in this paper (see below). A more detailed analysis of this
condition can be found in \cite{DVfree}, where a simple counterexample is also given.
\begin{enumerate}
\item In $(\Arr(\cA),\Theta_{\Delta})$ the reduced interchange holds true. Indeed, given
$$\xymatrix{A \ar[d]_{a} \ar[r]^{f} & B \ar[d]_{b} \ar[r]^{g} & C \ar[d]^{c} \\
A_0 \ar[r]_{f_0} \ar[ru]^{\alpha} & B_0 \ar[r]_{g_0} \ar[ru]^{\beta} & C_0}$$
one has $\alpha \circ (g,g_0) = \alpha \cdot g = \alpha \cdot b \cdot \beta = f_0 \cdot \beta = (f,f_0) \circ \beta.$
\item Since the reduced interchange holds true in $(\Arr(\cA),\Theta_{\Delta}),$ the same happens in $(\cA,\Theta_{\emptyset}).$ 
This follows from the fact that the morphism $\Gamma$ of Example \ref{ExFunctNullHom} is bijective on nullhomotopies.
\item Let me notice here that, if the structure of nullhomotopies $\Theta$ in a category $\cB$ is the one induced by the unit of an idempotent monad 
or by the counit of an idempotent comonad on $\cB$ (see \cite{MMMV}), then the reduced interchange holds true in $(\cB,\Theta).$ The easy proof 
is left to the reader. The case of $(\Arr(\cA),\Theta_{\Delta})$ fits into this general remark because $\Theta_{\Delta}$ is induced by $\cC \dashv \cU$ 
or by $\cU \dashv \cD,$ as already recalled in Example \ref{ExNullHom}.
\end{enumerate}
}\end{Example} 

\section{Homotopy cokernels and strong colimits}\label{SecHCok}

A category with nullhomotopies does not have the 2-dimensional structure needed to express notions like 2-limits or bilimits. 
The convenient notions in the context of categories with nullhomotopies are those of (strong) homotopy kernels and (strong) 
homotopy cokernels. We copy the definition and the notation from \cite{MMMV}.

\begin{Definition}\label{DefHKer}{\rm
Let $g \colon B \to C$ be an arrow in a category with nullhomotopies $(\cB,\Theta).$ 
\begin{enumerate}
\item A homotopy cokernel of $g$ with respect to $\Theta$ (or $\Theta$-cokernel) is a triple 
$$\cC(g) \in \cB, c_g \colon C \to \cC(g), \gamma_g \in \Theta(g \cdot c_g)$$
such that, for any other triple $(D,h,\varphi \in \Theta(g \cdot h)),$ there exists a unique arrow $h'$ such that 
$c_g \cdot h' = h$ and $\gamma_g \circ h' = \varphi$
$$\xymatrix{ & \ar@{}[d]_{\gamma_g}|{\Downarrow} & \cC(g) \ar[dd]^{h'} \\ 
B \ar@/^1.0pc/@{-->}[rru]^{0} \ar@/_1.0pc/@{-->}[rrd]_{0} \ar[r]^-{g} & C \ar[ru]_{c_g} \ar[rd]^{h} \\
 & \ar@{}[u]_{\Uparrow}|{\varphi} & D }$$
\item A $\Theta$-cokernel $(\cC(g),c_g,\gamma_g)$ is strong if, for any triple $(D,h,\varphi \in \Theta(c_g \cdot h)),$ such that 
$g \circ \varphi = \gamma_g \circ h,$ there exists a unique nullhomotopy $\varphi' \in \Theta(h)$ such that $c_g \circ \varphi' = \varphi$
$$\xymatrix{B \ar[rr]^-{g} \ar@/_1.8pc/@{-->}[rrrr]_{0}^{\gamma_g\; \Uparrow} & & 
C \ar@/^1.8pc/@{-->}[rrrr]^{0}_{\varphi \; \Downarrow} \ar[rr]^-{c_g} & & \cC(g) \ar[rr]^{h} \ar@/_1.5pc/@{-->}[rr]_{0}^{\varphi' \; \Uparrow} & & D  }$$
\end{enumerate}
}\end{Definition}


\begin{Remark}\label{RemNofCof}{\rm
We list here some remarks on the $\Theta$-cokernel of an arrow in a category with nullhomotopies $(\cB,\Theta).$ 
\begin{enumerate}
\item Uniqueness: the $\Theta$-cokernel of an arrow is determined by its universal property uniquely up to a unique isomorphism. 
Moreover, if an arrow has two (necessarily isomorphic) $\Theta$-cokernels and one of them is strong, the other one also is strong.
\item Functoriality: in the situation of the following commutative solid diagram
$$\xymatrix{A \ar[rr]^{f} \ar[d]_{a} \ar@{-->}@/_2.5pc/[dd]_{0}^<<<<<<<<<<<<<<<<<<<{\Longrightarrow}
\ar@{-->}@/_2.5pc/[dd]_{0}^<<<<<<<<<<<<<<<<<{\; \gamma_a} & & 
B \ar[d]^{b}  \ar@{-->}@/^2.5pc/[dd]^{0}_<<<<<<<<<<<<<<<<<<<{\Longleftarrow}
\ar@{-->}@/^2.5pc/[dd]^{0}_<<<<<<<<<<<<<<<<<{\gamma_b \;} \\
A_0 \ar[rr]_{f_0} \ar[d]_{c_a} & & B_0 \ar[d]^{c_b} \\
\cC(a) \ar@{..>}[rr]_{\cC(f,f_0)} & & \cC(b)}$$
there exists a unique arrow $\cC(f,f_0) \colon \cC(a) \to \cC(b)$ such that $c_a \cdot \cC(f,f_0) = f_0 \cdot c_b$ and 
$\gamma_a \circ \cC(f,f_0) = f \circ \gamma_b.$
\item Behavior with respect to nullhomotopies: in the situation of the following commutative diagram 
$$\xymatrix{A \ar[rr]^{f} \ar[d]_{a} \ar@{-->}@/_2.5pc/[dd]_{0}^<<<<<<<<<<<<<<<<<<<{\Longrightarrow}
\ar@{-->}@/_2.5pc/[dd]_{0}^<<<<<<<<<<<<<<<<<{\; \gamma_a} & & 
B \ar[d]^{b}  \ar@{-->}@/^2.5pc/[dd]^{0}_<<<<<<<<<<<<<<<<<<<{\Longleftarrow}
\ar@{-->}@/^2.5pc/[dd]^{0}_<<<<<<<<<<<<<<<<<{\gamma_b \;} \\
A_0 \ar[rr]_{f_0} \ar[d]_{c_a} \ar[rru]^{d} & & B_0 \ar[d]^{c_b} \\
\cC(a) \ar[rr]^{\cC(f,f_0)}  \ar@{-->}@/_1.5pc/[rr]_{0}^{\Uparrow \; \cC(d) } & & \cC(b)}$$
if the $\Theta$-cokernel of the arrow $a$ is strong, then there exists a unique nullhomotopy $\cC(d) \in \Theta(\cC(f,f_0))$ 
such that $c_a \circ \cC(d) = d \circ \gamma_b.$
\item Cancellation properties:
\begin{enumerate}
\item In the situation
$$\xymatrix{A \ar@/^1.8pc/@{-->}[rrrr]^{0}_{\gamma_f \; \Downarrow}  \ar[rr]_{f} && B \ar[rr]_{c_f} && \cC(f) \ar@<0.5ex>[rr]^{g} \ar@<-0.5ex>[rr]_{h} && C}$$
if $c_f \cdot g = c_f \cdot h$ and $\gamma_f \circ g = \gamma_f \circ h,$ then $g=h.$
\item Assume now that the reduced interchange \ref{CondRedInter} holds in $(\cB,\Theta).$ In the situation
$$\xymatrix{A \ar[r]^{f} & B \ar[r]^{c_f} & \cC(f) \ar@/^1.8pc/@{-->}[rrr]^{0}_{\varphi \, \Downarrow \;\; \Downarrow \, \psi} \ar[rrr]_-{g}  &&& C}$$
if the $\Theta$-cokernel is strong and if the nullhomotopies $\varphi, \psi \in \Theta(g)$ are such that $c_f \circ \varphi = c_f \circ \psi,$ 
then $\varphi = \psi.$
\end{enumerate}
\end{enumerate}
}\end{Remark} 

\begin{proof}
We check point 4.(b) because this is the first place where we use the reduced interchange. Put $\alpha = c_f \circ \varphi.$
By the reduced interchange, we have $\gamma_f \circ g = f \cdot c_f \circ \varphi = f \circ \alpha.$ We can apply the universal property 
of the $\Theta$-cokernel and we get a unique nullhomotopy $\alpha' \in \Theta(g)$ such that $c_f \circ \alpha' = \alpha.$ Clearly, we can 
take $\alpha' = \varphi$ but, because of the hypothesis $c_f \circ \varphi = c_f \circ \psi,$ we can take also $\alpha' = \psi.$ By uniqueness 
of $\alpha',$ we are done.
\end{proof}

\begin{Remark}\label{RemCofCompl}{\rm 
Let us analyze here objects, arrows and nullhomotopies in $(\Arr(\cA),\Theta_{\Delta})$ from the point of view of $\Theta_{\Delta}$-cokernels.
In fact, the following simple remarks are the starting point to see that $(\Arr(\cA),\Theta_{\Delta})$ is the completion of $\cA$ by strong homotopy cokernels, as we will see in Section \ref{SecUniv1}.
\begin{enumerate}
\item Assume that the category $\cA$ has an initial object $\emptyset$ and consider the embedding $\Gamma \colon \cA \to \Arr(\cA)$ 
described in Example \ref{ExFunctNullHom}. For any arrow $a \colon A \to A_0$ in $\cA,$ the following diagram is a 
($\Theta_{\Delta}$-kernel $|$ $\Theta_{\Delta}$-cokernel) diagram in $(\Arr(\cA),\Theta_{\Delta}) \colon$
$$\xymatrix{\emptyset \ar[r] \ar[d] & \emptyset \ar[r] \ar[d] & A \ar[d]^{a} \\
A \ar[r]_{a} \ar[rru]^<<<<<<{\id} & A_0 \ar[r]_{\id} & A_0} \;\;\;\;\; \mbox{ that is } \;\;\;\;\; 
\xymatrix{ \\ \Gamma A \ar[rr]_{\Gamma a} \ar@/^1.8pc/@{-->}[rrrr]^{0}_{\id_A \; \Downarrow} & & 
\Gamma A_0 \ar[rr]_-{(\emptyset_A,\id_{A_0}} & & (A,a,A_0) }$$
In other words, each object $(a \colon A \to A_0)$ of $\Arr(\cA)$ is the $\Theta_{\Delta}$-cokernel of an arrow coming from $\cA$
(and each arrow of $\cA,$ once embedded in $\Arr(\cA),$ becomes the arrow part of a $\Theta_{\Delta}$-kernel). \\
\item More is true: each arrow $(f,f_0) \colon (A,a,A_0) \to (B,b,B_0)$ of $\Arr(\cA)$ is the unique extension to the $\Theta_{\Delta}$-cokernel 
(in the sense of Remark \ref{RemNofCof}.2) of a commutative square coming from $\cA,$ as in the following diagram:
$$\xymatrix{\Gamma A \ar[rr]^{\Gamma f} \ar[d]_{\Gamma a} \ar@{-->}@/_2.5pc/[dd]_{0}^<<<<<<<<<<<<<<<<<<<{\Longrightarrow}
\ar@{-->}@/_2.5pc/[dd]_{0}^<<<<<<<<<<<<<<<<<{\id_A} & & 
\Gamma B \ar[d]^{\Gamma b}  \ar@{-->}@/^2.5pc/[dd]^{0}_<<<<<<<<<<<<<<<<<<<{\Longleftarrow}
\ar@{-->}@/^2.5pc/[dd]^{0}_<<<<<<<<<<<<<<<<<{\id_B} \\
\Gamma A_0 \ar[rr]_{\Gamma f_0} \ar[d]^{(\emptyset_A,\id_{A_0})} & & \Gamma B_0 \ar[d]_{(\emptyset_B,\id_{B_0})} \\
(A,a,A_0) \ar[rr]_{(f,f_0)} & & (B,b,B_0)}$$
\item Finally, each nullhomotopy $\varphi \in \Theta_{\Delta}(f,f_0)$ is the unique extension to the $\Theta_{\Delta}$-cokernel 
(in the sense of Remark \ref{RemNofCof}.3) of a diagonal coming from $\cA,$ as in the following diagram:
$$\xymatrix{\Gamma A \ar[rr]^{\Gamma f} \ar[d]_{\Gamma a} \ar@{-->}@/_2.5pc/[dd]_{0}^<<<<<<<<<<<<<<<<<<<{\Longrightarrow}
\ar@{-->}@/_2.5pc/[dd]_{0}^<<<<<<<<<<<<<<<<<{\id_A} & & 
\Gamma B \ar[d]^{\Gamma b}  \ar@{-->}@/^2.5pc/[dd]^{0}_<<<<<<<<<<<<<<<<<<<{\Longleftarrow}
\ar@{-->}@/^2.5pc/[dd]^{0}_<<<<<<<<<<<<<<<<<{\id_B} \\
\Gamma A_0 \ar[rr]_{\Gamma f_0} \ar[d]^{(\emptyset_A,\id_{A_0})} \ar[rru]^{\Gamma \varphi} & & \Gamma B_0 \ar[d]_{(\emptyset_B,\id_{B_0})} \\
(A,a,A_0) \ar[rr]^{(f,f_0)}  \ar@{-->}@/_1.5pc/[rr]_{0}^{\Uparrow \; \varphi} & & (B,b,B_0)}$$
\end{enumerate}
}\end{Remark}

The following proposition appears in \cite{MMMV}, where it is deduced from some general results on the existence of homotopy cokernels.

\begin{Proposition}\label{PropCofNofArr} 
If a category $\cA$ has pushouts, then $\Arr(\cA)$ has strong $\Theta_{\Delta}$-cokernels.
\end{Proposition}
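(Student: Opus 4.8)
The plan is to produce the homotopy cokernel by a single pushout in $\cA$ and then reduce both universal properties to the fact that the two coprojections of a pushout are jointly epimorphic.

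First I would fix an arrow $(g,g_0)\colon (B,b,B_0)\to(C,c,C_0)$, so that $g\cdot c = b\cdot g_0$, and form in $\cA$ the pushout of the span $B_0 \xleftarrow{\,b\,} B \xrightarrow{\,g\,} C$, with object $P$ and coprojections $\gamma\colon B_0\to P$ and $p\colon C\to P$ satisfying $b\cdot\gamma = g\cdot p$. Since $g\cdot c = b\cdot g_0$, the pushout property yields a unique $k\colon P\to C_0$ with $p\cdot k = c$ and $\gamma\cdot k = g_0$. I claim the strong $\Theta_\Delta$-cokernel of $(g,g_0)$ is
$$\cC(g,g_0)=(P,k,C_0),\qquad c_{(g,g_0)}=(p,\id_{C_0})\colon (C,c,C_0)\to (P,k,C_0),\qquad \gamma_{(g,g_0)}=\gamma .$$
Note that here the codomain object is simply $C_0$ unchanged, and that the cokernel nullhomotopy is literally the $B_0$-leg of the pushout. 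The equalities $p\cdot k = c$ and $\gamma\cdot k = g_0$ say precisely that $(p,\id_{C_0})$ is a well-defined arrow and that $\gamma$ is a well-defined element of $\Theta_\Delta(g\cdot p,\,g_0)=\Theta_\Delta((g,g_0)\cdot c_{(g,g_0)})$, so the data makes sense.

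Next I would verify the universal property. Given a test arrow $(h,h_0)\colon(C,c,C_0)\to(D,d,D_0)$ and a nullhomotopy $\varphi\colon B_0\to D$ in $\Theta_\Delta(g\cdot h,\,g_0\cdot h_0)$, so that $b\cdot\varphi = g\cdot h$ and $\varphi\cdot d = g_0\cdot h_0$, unwinding the composition formula $f_0\cdot\varphi\cdot h$ shows that any factorization $(h',h_0')$ must satisfy $p\cdot h' = h$, $h_0' = h_0$ and $\gamma\cdot h' = \varphi$. The compatibility $g\cdot h = b\cdot\varphi$ is exactly what is needed to invoke the universal property of $P$, producing a unique $h'\colon P\to D$ with $p\cdot h'=h$ and $\gamma\cdot h'=\varphi$; then $h_0'=h_0$ is forced, so uniqueness is clear. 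It remains to check that $(h',h_0)$ is a genuine arrow, i.e.\ $k\cdot h_0 = h'\cdot d$; I would obtain this by precomposing with $\gamma$ and with $p$, using $\gamma\cdot k = g_0$, $p\cdot k = c$ and the defining equations of $\varphi$ and of $(h,h_0)$, and then appealing to joint epimorphy of $\gamma$ and $p$.

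For strongness I would take a test arrow $(h,h_0)\colon(P,k,C_0)\to(D,d,D_0)$ and a nullhomotopy $\psi\in\Theta_\Delta(c_{(g,g_0)}\cdot(h,h_0))$, that is $\psi\colon C_0\to D$ with $c\cdot\psi=p\cdot h$ and $\psi\cdot d=h_0$, subject to $(g,g_0)\circ\psi = \gamma\circ(h,h_0)$. The composition formula turns this hypothesis into $g_0\cdot\psi = \gamma\cdot h$, and turns the requirement $c_{(g,g_0)}\circ\psi' = \psi$ into $\id_{C_0}\cdot\psi'=\psi$, i.e.\ $\psi'=\psi$; uniqueness is therefore automatic. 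What must be checked is that $\psi$ itself lies in $\Theta_\Delta((h,h_0))$, namely $\psi\cdot d = h_0$ (given) and $k\cdot\psi = h$. The latter I would again read off from joint epimorphy of the pushout legs: precomposing $k\cdot\psi$ with $\gamma$ gives $g_0\cdot\psi = \gamma\cdot h$ (the hypothesis), and precomposing with $p$ gives $c\cdot\psi = p\cdot h$ (the defining equation of $\psi$), so $k\cdot\psi=h$.

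I expect the only real friction to be notational rather than conceptual: the constant translation between nullhomotopies and their underlying diagonals, and in particular keeping straight that precomposition by $(f,f_0)$ uses the codomain leg $f_0$ while postcomposition uses the domain leg. The one idea that makes pushouts \emph{alone} sufficient — and that I would flag explicitly at the start — is the observation that the codomain object of the cokernel can be taken to be $C_0$ itself, with $p_0=\id_{C_0}$, so that the entire construction collapses onto the single pushout of $b$ and $g$ and every remaining step is handled by its universal property and the joint epimorphy of its coprojections.
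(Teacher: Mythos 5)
Your construction is exactly the one the paper sketches in \ref{TextCofArr}: the pushout of $b$ and $g$ (the paper's $a$ and $f$), with the comparison map to the unchanged codomain object as the new structural arrow, the coprojection paired with the identity as the cokernel arrow, and the other coprojection as the structural nullhomotopy. Your verifications of the two universal properties (which the paper defers to \cite{MMMV}) are correct, so this is essentially the paper's own approach, carried out in full.
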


\begin{Text}\label{TextCofArr}{\rm
Even if Proposition \ref{PropCofNofArr} does not need a proof, I wish to share with the reader the guiding idea to construct  
$\Theta_{\Delta}$-cokernels in $\Arr(\cA)$ because it seems to me quite easy and instructive in order to understand the arguments 
behind the proof given in \cite{MMMV}. The following description already appears in \cite{SnailEV,GJ}. \\
The $\Theta_{\Delta}$-cokernel of an arrow $(f,f_0)$ in $\Arr(\cA)$ must be universal among all diagrams of shape
$$\xymatrix{A \ar[d]_{a} \ar[r]^{f} & B \ar[d]_<<<{b} \ar[r]^{g} & C \ar[d]^{c} \\
A_0 \ar[r]_{f_0} \ar[rru]_>>>>>>>>{\varphi} & B_0 \ar[r]_{g_0} & C_0}$$
where the following diagrams commute 
$$\xymatrix{A \ar[r]^{f} \ar[d]_{a} & B \ar[d]^{g} \\ A_0 \ar[r]_{\varphi} & C} \;\;\;\;\;\;\;\;\;\;\;\;\;
\xymatrix{A_0 \ar[d]_{\varphi} \ar[rrd]^<<<<<<{f_0} & & B \ar[lld]_<<<<<<{g} \ar[d]^{b} \\
C \ar[rd]_{c} & & B_0 \ar[ld]^{g_0} \\ & C_0}$$
So, just replace these two diagrams by the corresponding colimits. We get
$$\xymatrix{A \ar[rr]^{f} \ar[d]_{a} & & B \ar[d]^{a'} \\ A_0 \ar[rr]_{f'} & & A_0 +_{a,f} B} \;\;\;\;\;\;\;\;\;\;\;\;\;
\xymatrix{A_0 \ar[d]_{f'} \ar[rrd]^<<<<<<{f_0} & & B \ar[lld]_<<<<<<{a'} \ar[d]^{b} \\
A_0 +_{a,f} B \ar[rd]_{[f_0,b]} & & B_0 \ar[ld]^{\id} \\ & B_0}$$
where $A_0 +_{a,f} B$ is the pushout of $a$ and $f.$ Finally, the $\Theta_{\Delta}$-cokernel of $(f,f_0)$ is
$$\xymatrix{A \ar[rr]^{f} \ar[d]_{a} & & B \ar[d]_<<<{b} \ar[rr]^{a'} & & A_0 +_{a,f} B \ar[d]^{[f_0,b]} \\ 
A_0 \ar[rrrru]_>>>>>>>>>{f'} \ar[rr]_{f_0} & & B_0 \ar[rr]_{\id} & & B_0}$$
}\end{Text}


The interplay between nullhomotopies and colimits will enter in the statement and in the proof of the universal property of $\Arr(\cA).$ 
This is why we need the following definitions.

\begin{Definition}\label{DefNatNull}{\rm 
Consider two functors $\cF, \cG \colon \cD \to \cB,$ where $(\cB,\Theta)$ is a category with nullhomotopies. A natural nullhomotopy 
$$\xymatrix{\cD \ar@/^1.0pc/[rr]^{\cG} \ar@/_1.0pc/[rr]_{\cF} & \Uparrow \, \tau & \cB}$$
is given by a family of arrows and a family of nullhomotopies indexed by the objects of $\cD,$
$$\tau = \{\tau^a_D \colon \cF(D) \to \cG(D) \;,\;\; \tau^n_D \in \Theta(\tau^a_D) \}_{D \in \cD}$$
such that the family of arrows is a natural transformation and the family of nullhomotopies is such that 
$\tau^n_D \circ \cG(g) = \cF(g) \circ \tau^n_{D'}$ for all $g \colon D \to D'$ in $\cD.$
}\end{Definition}

\begin{Definition}\label{DefColimNull}{\rm
Consider a functor $\cF \colon \cD \to \cB,$ where $(\cB,\Theta)$ is a category with nullhomotopies, and write
$$\{ i_D \colon \cF(D) \to \mbox{colim}F \}_{D \in \cD}$$
for its colimit. We say that the colimit of $\cF$ is strong with respect to nullhomotopies (or $\Theta$-strong) if, 
for every object $X \in \cB$ and for every natural nullhmotopy
$$\xymatrix{\cD \ar@/^1.0pc/[rr]^{\kappa_X} \ar@/_1.0pc/[rr]_{\cF} & \Uparrow \, \tau & \cB}$$
($\kappa_X$ is the constant functor of value $X$) there exists a 
unique nullhomotopy $t^n \in \Theta(t^a)$ such that $i_D \circ t^n = \tau^n_D$ for all $D \in \cD,$ where 
$t^a \colon \mbox{colim}F \to X$ is the unique arrow such that $i_D \cdot t^a = \tau^a_D$ for all $D \in \cD.$ 
}\end{Definition}

\begin{Remark}\label{RemColimNull}{\rm
Let us make explicit two special cases of Definition \ref{DefColimNull}. The second one appears also in \cite{MMMV}.
Let $(\cB,\Theta)$ be a category with nullhomotopies.
\begin{enumerate}
\item An initial object $\emptyset$ is $\Theta$-strong if, for every object $X \in \cB,$ there is a unique nullhomotopy on the unique 
arrow $\emptyset_X \colon \emptyset \to X.$
\item Consider the factorization of a commutative square $f \cdot x = g \cdot y$ through the pushout of $f$ and $g$ as in the following diagram:
$$\xymatrix{A \ar[rr]^{g} \ar[d]_{f} & & C \ar[d]_{f'} \ar@/^2pc/[rrdd]^{y} \\
B \ar[rr]^{g'} \ar@/_1.5pc/[rrrrd]_{x} & & B+_{f,g}C \ar[rrd]^{[x,y]} \\
 & & & & D}$$
The pushout is $\Theta$-strong if, given two nullhomotopies $\varphi \in \Theta(x)$ and $\psi \in \Theta(y)$ such that $f \circ \varphi = g \circ \psi,$ 
there exists a unique nullhomotopy $[\varphi,\psi] \in \Theta([x,y])$ such that $g' \circ [\varphi,\psi) = \varphi$ and $f' \circ [\varphi,\psi] = \psi.$
\item Clearly, a $\Theta$-strong colimit has a cancellation property with respect to nullhomotopies. Here is the one for a $\Theta$-strong pushout 
(with the notation of the previous point): given an arrow $h \colon B+_{f,g}C \to D$ and nullhomotopies 
$\alpha, \beta \in \Theta(h),$ if $g' \circ \alpha = g' \circ \beta$ and $f' \circ \alpha = f' \circ \beta,$ then $\alpha = \beta.$
\end{enumerate}
}\end{Remark}

\begin{Example}\label{ExTrivCof}{\rm
Let $(\cB,\Theta)$ be a category with nullhomotopies and let $\emptyset$ be a $\Theta$-strong initial object in $\cB.$
If, for an object $X \in \cB,$ we call $\gamma_X \in \Theta(\emptyset_X)$ the unique nullhomotopy on $\emptyset_X,$ then the 
following diagram is a $\Theta$-cokernel:
$$\xymatrix{\emptyset \ar[rr]_{\emptyset_X} \ar@/^1.8pc/@{-->}[rrrr]^{0}_{\gamma_X \; \Downarrow} & & X \ar[rr]_-{\id_X} & & X }$$
 }\end{Example}

Here is the interplay between nullhomotopies and colimits in $\Arr(\cA).$

\begin{Proposition}\label{PropEnrLimArr}
Let $\cA$ be a category with finite colimlits.
\begin{enumerate}
\item Finite colimits in $\cA$ are $\Theta_{\emptyset}$-strong.
\item $\Arr(\cA)$ has finite colimits and they are $\Theta_{\Delta}$-strong.
\item The functor $\Gamma \colon \cA \to \Arr(\cA)$ preserves finite colimits.
\end{enumerate}
\end{Proposition}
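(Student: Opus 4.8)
The three statements are essentially independent, and in each case the plan is to reduce the $\Theta$-strongness to an ordinary universal property of colimits in $\cA$.

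\emph{Point 1.} First I would unwind what a natural nullhomotopy into a constant functor means for $\Theta_{\emptyset}$. Given $\cF \colon \cD \to \cA$ with colimit cocone $\{i_D \colon \cF(D) \to \Colim \cF\}$ and a natural nullhomotopy $\tau$ from $\cF$ to $\kappa_X$, each $\tau^n_D$ is an arrow $\tau^n_D \colon \cF(D) \to \emptyset$ with $\tau^n_D \cdot \emptyset_X = \tau^a_D$, and (since $\cF(g) \circ \tau^n_{D'} = \cF(g) \cdot \tau^n_{D'}$ in $\Theta_{\emptyset}$) the compatibility condition of Definition \ref{DefNatNull} reads $\tau^n_D = \cF(g) \cdot \tau^n_{D'}$ for every $g \colon D \to D'$. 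This is exactly the assertion that $\{\tau^n_D\}_D$ is a cocone on $\cF$ with vertex $\emptyset$. I would apply the universal property of $\Colim \cF$ to this cocone to get the unique $t^n \colon \Colim \cF \to \emptyset$ with $i_D \cdot t^n = \tau^n_D$, then check $t^n \in \Theta_{\emptyset}(t^a)$ by verifying $t^n \cdot \emptyset_X = t^a$, which holds because both arrows compose with every $i_D$ to give $\tau^a_D$ and the $i_D$ are jointly epic; uniqueness of $t^n$ is uniqueness of the factorization through the colimit. So $\Theta_{\emptyset}$-strongness of a colimit is literally its ordinary universal property tested against cocones into $\emptyset$.

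\emph{Point 2.} Since $\Arr(\cA)$ is the functor category $[\mathbf 2,\cA]$ with $\mathbf 2$ the arrow category $\{0 \to 1\}$, it has all colimits that $\cA$ has and they are computed pointwise. Thus a finite diagram $\cH \colon \cD \to \Arr(\cA)$, written $\cH(D) = (H_D, h_D, H_D^0)$, has colimit $(L, \ell, L_0)$ with $L = \Colim H_D$, $L_0 = \Colim H_D^0$, $\ell$ induced by the $h_D$, and colimit cocone $(i_D, i_D^0)$; in particular $i_D \cdot \ell = h_D \cdot i_D^0$. For $\Theta_{\Delta}$-strongness I would fix a target $(X,x,X_0)$ and a natural nullhomotopy $\tau$ into $\kappa_{(X,x,X_0)}$, writing $\tau^a_D = (p_D, p_D^0)$. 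Each $\tau^n_D$ is a diagonal $\tau^n_D \colon H_D^0 \to X$ with $h_D \cdot \tau^n_D = p_D$ and $\tau^n_D \cdot x = p_D^0$, and—because left whiskering in $\Theta_{\Delta}$ acts through the codomain component—the compatibility condition becomes $\tau^n_D = k_0 \cdot \tau^n_{D'}$ for every $u \colon D \to D'$ with $\cH(u) = (k, k_0)$. This says precisely that $\{\tau^n_D\}_D$ is a cocone on the codomain diagram $D \mapsto H_D^0$ with vertex $X$, so the universal property of $L_0$ yields a unique $t^n \colon L_0 \to X$ with $i_D^0 \cdot t^n = \tau^n_D$. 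It then remains to verify that $t^n$ is a diagonal for the induced arrow $t^a = (s, s_0)$, i.e. $\ell \cdot t^n = s$ and $t^n \cdot x = s_0$: the first follows by precomposing with $i_D$ and using $i_D \cdot \ell = h_D \cdot i_D^0$, the second by precomposing with $i_D^0$, concluding in both cases by joint epimorphy of the colimit injections. Finally $(i_D, i_D^0) \circ t^n = i_D^0 \cdot t^n = \tau^n_D$, and uniqueness of $t^n$ is uniqueness of the factorization through $L_0$.

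\emph{Point 3.} The cleanest route is to observe that $\Gamma$ is a left adjoint. For $X \in \cA$ and an object $(B,b,B_0)$ of $\Arr(\cA)$, a morphism $\Gamma X \to (B,b,B_0)$ is a commutative square whose top edge $\emptyset \to B$ is forced and whose bottom edge is an arbitrary arrow $X \to B_0$, the commutativity being automatic since $\emptyset$ is initial; this gives a bijection natural in both variables with $\cA(X, B_0)$. Hence $\Gamma \dashv \cC$, where $\cC$ is the codomain functor of Example \ref{ExNullHom}, and as a left adjoint $\Gamma$ preserves all colimits, in particular the finite ones. (Alternatively, compute the pointwise colimit of $\Gamma \circ \cK$ directly: its codomain is $\Colim \cK$ and its domain is $\Colim \kappa_{\emptyset}$, which is initial because any cocone from the constant diagram $\kappa_{\emptyset}$ to an object $Y$ reduces to the unique arrow $\emptyset \to Y$, so $\Colim \kappa_{\emptyset}$ has the universal property of $\emptyset$; this recovers $\Gamma(\Colim \cK)$.) The routine parts are Points 1 and 3; the bookkeeping in Point 2 is the main thing to get right, but the conceptual content there is mild, since the nullhomotopy data of $\Theta_{\Delta}$ lives entirely at the level of diagonals landing in the domain $X$, so $\Theta_{\Delta}$-strongness collapses to the universal property of the codomain colimit $L_0$, exactly as $\Theta_{\emptyset}$-strongness collapsed to a universal property in Point 1.
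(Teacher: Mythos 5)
Your proposal is correct, and its core mechanism is the same one the paper uses: in $\Theta_{\Delta}$ the compatibility condition on a family of nullhomotopies is precisely a cocone condition at the codomain level, so the required nullhomotopy is just the ordinary factorization through the codomain colimit, and the diagonal identities are then checked by precomposing with the colimit injections (joint epimorphy). The difference is one of scope and packaging. The paper's proof is deliberately minimal: Point 1 is declared an easy exercise, Point 3 is asserted as obvious from the levelwise construction of colimits in $\Arr(\cA)$, and Point 2 is verified only ``for example'' on pushouts --- and that pushout computation is exactly your general argument specialized to $\cD$ the span category (the unique arrow $[\varphi,\psi] \colon B_0+_{f_0,g_0}C_0 \to D$ there is your $t^n \colon L_0 \to X$). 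You instead run the argument uniformly for an arbitrary finite diagram $\cH \colon \cD \to \Arr(\cA)$, which covers the initial object, binary coproducts, coequalizers and pushouts in one stroke, and you spell out Point 1 in full. For Point 3 your route is genuinely different and slicker: the adjunction $\Gamma \dashv \cC$ (which indeed holds, and coexists with the string $\cC \dashv \cU \dashv \cD$ recalled in Example \ref{ExNullHom}, $\cC$ having adjoints on both sides) makes preservation of all colimits automatic, whereas the paper implicitly relies on the levelwise description plus the fact that the constant-$\emptyset$ diagram has colimit $\emptyset$, which is your parenthetical alternative. In short: same idea, but your version is self-contained and more general, at the cost of some bookkeeping the paper chose to omit.
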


\begin{proof}
The first point is an easy exercise. Moreover, colimits in $\Arr(\cA)$ are constructed level-wise from those in $\cA$ and 
obviously $\Gamma$ preserves colimits. Let me check, for example, that pushouts in $\Arr(\cA)$ are $\Theta_{\Delta}$-strong. 
Consider the following diagrams in $\Arr(\cA),$ the first one being a pushout and the second one being commutative :
$$\xymatrix{& A \ar[rr]^{g} \ar[dd]^<<<<<<{a} \ar[ld]_{f} & & C \ar[ld]_{f'} \ar[dd]^{c} \\
B \ar[rr]^<<<<<<<<<<<<<<<<<<<<{g'} \ar[dd]_{b} & & B+_{f,g}C \ar[dd]^<<<<<<{[b \cdot g_0', c \cdot f_0']} \\
& A_0 \ar[rr]^<<<<<<<<<<{g_0} \ar[ld]_{f_0} & & C_0 \ar[ld]^{f_0'} \\
B_0 \ar[rr]_{g_0'} & & B_0+_{f_0,g_0}C_0}
\;\;\;\;\;
\xymatrix{& A \ar[rr]^{g} \ar[dd]^<<<<<<{a} \ar[ld]_{f} & & C \ar[ld]_{y} \ar[dd]^{c} \\
B \ar[rr]^>>>>>>>>{x} \ar[dd]_{b} & & D \ar[dd]^<<<<<<{d} \\
& A_0 \ar[rr]^<<<<<<{g_0} \ar[ld]_{f_0} & & C_0 \ar[ld]^{y_0} \\
B_0 \ar[rr]_{x_0} & & D_0}$$
Consider also the unique factorization of the commutative diagram through the pushout:
$$\xymatrix{B+_{f,g}C \ar[d]_{[b \cdot g_0', c \cdot f_0']} \ar[rr]^{[x,y]} & & D \ar[d]^{d} \\
B_0+_{f_0,g_0}C_0 \ar[rr]_{[x_0,y_0]} & & D_0}$$
Given two nullhomotopies
$$\xymatrix{B \ar[r]^{x} \ar[d]_{b} & D \ar[d]_{d} & C \ar[l]_{y} \ar[d]^{c} \\
B_0 \ar[r]_{x_0} \ar[ru]^{\varphi} & D_0 & C_0 \ar[lu]_{\psi} \ar[l]^{y_0}}$$
the compatibility condition $(f,f_0) \circ \varphi = (g,g_0) \circ \psi$ means that $f_0 \cdot \varphi = g_0 \cdot \psi.$ Therefore, there exists a unique arrow
$[\varphi,\psi] \colon B_0+_{f_0,g_0}C_0 \to D$ such that $g_0' \cdot [\varphi,\psi] = \varphi$ and $f_0' \cdot [\varphi,\psi] = \psi.$
It remais to check that $[\varphi,\psi]$ is a nullhomotopy:
$$\xymatrix{B+_{f,g}C \ar[d]_{[b \cdot g_0', c \cdot f_0']} \ar[rr]^{[x,y]} & & D \ar[d]^{d} \\
B_0+_{f_0,g_0}C_0 \ar[rr]_{[x_0,y_0]} \ar[rru]^{[\varphi,\psi]} & & D_0}$$
The commutativity of the two triangles follows precomposing with the canonical arrows of the pushout.
\end{proof}

\section{Universality of $\Arr(\cA)$}\label{SecUniv1}

In this section we show that, if $\cA$ has finite colimits, the embedding $\Gamma \colon \cA \to \Arr(\cA)$ is the completion of $\cA$ 
by strong homotopy cokernels. We put on $\Arr(\cA)$ the structure of nullhomotopies $\Theta_{\Delta}$ introduced 
in Example \ref{ExNullHom}. The main point is to extend a functor $\cF \colon \cA \to \cB$ along $\Gamma \colon \cA \to \Arr(\cA).$ 

\begin{Proposition}\label{PropExt}
Consider a category $\cA$ with finite colimits, a category with nullhomotopies $(\cB,\Theta)$ satisfying the reduced interchange, 
and a functor $\cF \colon \cA \to \cB.$ Assume that
\begin{enumerate}
\item[(a)] the image by $\cF$ of finite colimits are $\Theta$-strong finite colimits, and
\item[(b)] the image by $\cF$ of any arrow in $\cA$ has a strong $\Theta$-cokernel in $\cB.$
\end{enumerate}
Under these conditions, there exists an essentially unique morphism of categories with nullhomotopies 
$\widehat \cF \colon (\Arr(\cA),\Theta_{\Delta}) \to (\cB,\Theta)$ sending
$\Theta_{\Delta}$-cokernels to strong $\Theta$-cokernels and such that $\Gamma \cdot \widehat \cF \simeq \cF.$ 
$$\xymatrix{\cA \ar[r]^<<<<<{\Gamma} \ar[rd]_{\cF} & \Arr(\cA) \ar[d]^{\widehat\cF} \\ & \cB}$$
Moreover, the image by $\widehat\cF$ of finite colimits are $\Theta$-strong finite colimits. 
\end{Proposition}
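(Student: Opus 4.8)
The plan is to let the universal features of strong cokernels dictate the construction. On an object $(a \colon A \to A_0)$ of $\Arr(\cA)$ I would set $\widehat\cF(a)$ equal to a chosen strong $\Theta$-cokernel $\cC(\cF a)$ of $\cF(a)$, which exists by hypothesis (b); this choice is in fact forced, since Remark \ref{RemCofCompl}.1 presents $(a \colon A \to A_0)$ as the $\Theta_\Delta$-cokernel of $\Gamma a$ and we are asking $\widehat\cF$ to preserve cokernels and to satisfy $\Gamma \cdot \widehat\cF \simeq \cF$. On an arrow $(f,f_0) \colon (A,a,A_0) \to (B,b,B_0)$ I would apply $\cF$ to the defining commutative square and take $\widehat\cF(f,f_0) \colon \cC(\cF a) \to \cC(\cF b)$ to be the induced arrow furnished by Remark \ref{RemNofCof}.2; on a nullhomotopy $\varphi \in \Theta_\Delta(f,f_0)$, which is a diagonal $\varphi \colon A_0 \to B$ in $\cA$, I would apply $\cF$ and let $\widehat\cF(\varphi) \in \Theta(\widehat\cF(f,f_0))$ be the nullhomotopy produced by Remark \ref{RemNofCof}.3 (available precisely because $\cC(\cF a)$ is strong), characterized by $c_{\cF a} \circ \widehat\cF(\varphi) = \cF\varphi \circ \gamma_{\cF b}$.

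The first block of checks is that these assignments constitute a morphism of categories with nullhomotopies in the sense of Definition \ref{DefFunctNullHom}. Functoriality follows from the uniqueness clause in Remark \ref{RemNofCof}.2: both $\widehat\cF((f,f_0) \cdot (g,g_0))$ and $\widehat\cF(f,f_0) \cdot \widehat\cF(g,g_0)$ satisfy the same defining equations relative to the outer square, hence coincide, and identities are immediate. The compatibility of $\widehat\cF$ with the composition of nullhomotopies (the axiom of Definition \ref{DefFunctNullHom}.1) I would verify by composing the two candidate nullhomotopies with the cokernel arrow $c_{\cF a}$ and invoking the cancellation property of Remark \ref{RemNofCof}.4(b); this is the genuine use of the reduced interchange hypothesis on $(\cB,\Theta)$, exactly as in Condition \ref{CondRedInter}. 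The triangle $\Gamma \cdot \widehat\cF \simeq \cF$ then comes nearly for free: since $\cF$ preserves finite colimits, $\cF(\emptyset)$ is a $\Theta$-strong initial object, so by Example \ref{ExTrivCof} the identity on $\cF(X)$ is a $\Theta$-cokernel of $\cF(\emptyset_X) = \emptyset_{\cF X}$; by uniqueness of cokernels (Remark \ref{RemNofCof}.1) this agrees up to canonical isomorphism with the chosen $\widehat\cF(\Gamma X) = \cC(\cF(\emptyset_X))$, and these isomorphisms assemble into the required invertible $2$-morphism (automatically a $2$-morphism by Remark \ref{Rem2Cat}).

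The substantial part, and the step I expect to be the main obstacle, is the pair of preservation statements: that $\widehat\cF$ sends $\Theta_\Delta$-cokernels to strong $\Theta$-cokernels, and that it sends finite colimits to $\Theta$-strong finite colimits. My plan is to lean on the explicit descriptions already available. For a general $\Theta_\Delta$-cokernel I would start from the pushout presentation of Text \ref{TextCofArr}, apply $\cF$, and use hypothesis (a) to replace $A_0 +_{a,f} B$ by the $\Theta$-strong pushout $\cF A_0 +_{\cF a, \cF f} \cF B$ in $\cB$; the remaining task is to recognize the resulting configuration as a strong $\Theta$-cokernel of $\widehat\cF(f,f_0)$, which amounts to an interchange between strong $\Theta$-cokernels and $\Theta$-strong pushouts. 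For finite colimits I would use that colimits in $\Arr(\cA)$ are computed level-wise (Proposition \ref{PropEnrLimArr}), so preservation reduces to showing that the strong $\Theta$-cokernel of $\cF(\mathrm{colim}\,a_d)$ agrees with the colimit of the objects $\cC(\cF a_d)$, once more an interchange of a strong cokernel with a $\Theta$-strong colimit. In both cases the delicate issue is not the underlying colimit in $\cB$, which is controlled by (a) and by $\cF$ preserving colimits, but matching the nullhomotopy data and proving the resulting colimits and cokernels are themselves $\Theta$-strong; here I would rely on the cancellation properties of Remarks \ref{RemNofCof}.4 and \ref{RemColimNull}.3 together with the reduced interchange.

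Finally, essential uniqueness is a formal density argument. Any morphism $\cG$ preserving $\Theta_\Delta$-cokernels with $\Gamma \cdot \cG \simeq \cF$ must send the cokernel presentation of $(a \colon A \to A_0)$ to a strong $\Theta$-cokernel of $\cG(\Gamma a) \cong \cF a$, hence, by uniqueness of cokernels, to an object canonically isomorphic to $\widehat\cF(a)$; the same universal properties pin $\cG$ down on arrows and on nullhomotopies, and these canonical comparisons organize into an invertible $2$-morphism $\cG \simeq \widehat\cF$.
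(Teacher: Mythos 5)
Your proposal follows essentially the same route as the paper's proof: the same construction of $\widehat\cF$ on objects, arrows and nullhomotopies via chosen strong $\Theta$-cokernels (Remarks \ref{RemNofCof}.2--3), the same forcing/uniqueness argument via the presentation of $\Arr(\cA)$ in Remark \ref{RemCofCompl}, the same treatment of $\Gamma \cdot \widehat\cF \simeq \cF$ via Example \ref{ExTrivCof}, and the same strategy for the preservation statements, namely the pushout presentation of Text \ref{TextCofArr} combined with hypothesis (a), the cancellation properties of Remarks \ref{RemNofCof}.4 and \ref{RemColimNull}.3, and the reduced interchange. The verifications you defer as ``the main obstacle'' (gluing a nullhomotopy over the $\Theta$-strong pushout and then factoring it through the strong $\Theta$-cokernel, in both directions) are exactly the computations the paper carries out in its steps 3), 4), 6) and 7), and they go through with the tools you name.
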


\begin{proof}
We split the proof into seven steps. \\
1) Construction of $\widehat \cF \colon$ start with two objects, an arrow and a nullhomotopy in $\Arr(\cA) \colon$
$$\xymatrix{A \ar[r]^{f} \ar[d]_{a} & B \ar[d]^{b} \\ 
A_0 \ar[ru]^{\alpha} \ar[r]_{f_0} & B_0}$$
their images by $\widehat \cF$ are depicted in the following commutative diagram, where both columns are $\Theta$-cokernels:
$$\xymatrix{\cF A \ar[rr]^{\cF f} \ar[d]_{\cF a} \ar@{-->}@/_3.0pc/[dd]_{0}^<<<<<<<<<<<<<{\Longrightarrow}
\ar@{-->}@/_3.0pc/[dd]_{0}^<<<<<<<<<<{\, \gamma_{\cF a}} & & 
\cF B \ar[d]^{\cF b}  \ar@{-->}@/^3.0pc/[dd]^{0}_<<<<<<<<<<<<<{\Longleftarrow}
\ar@{-->}@/^3.0pc/[dd]^{0}_<<<<<<<<<<{\gamma_{\cF b} \,} \\
\cF A_0 \ar[rr]_{\cF f_0} \ar[d]_{c_{\cF a}} \ar[rru]^{\cF \alpha} & & \cF B_0 \ar[d]^{c_{\cF b}} \\
\widehat \cF(A,a,A_0) \ar[rr]^{\widehat \cF(f,f_0)} \ar@{-->}@/_1.5pc/[rr]_{0}^{\Uparrow \; \widehat \cF \alpha} & & \widehat \cF(B,b,B_0)}$$
The arrow $\widehat \cF(f,f_0)$ is the unique arrow such that $c_{\cF a} \cdot \widehat \cF(f,f_0) = \cF f_0 \cdot c_{\cF b}$ and
$\gamma_{\cF a} \circ \widehat \cF(f,f_0) = \cF f \circ \gamma_{\cF b},$ see Remark \ref{RemNofCof}.2. 
The nullhomotopy $\widehat \cF \alpha$ is the unique nullhomotopy such that 
$c_{\cF a} \circ \widehat \cF \alpha = \cF \alpha \circ \gamma_{\cF b},$ see Remark \ref{RemNofCof}.3. 
It is easy to check that $\widehat\cF$ is indeed a morphism of categories with nullhomotopies. \\
2) Uniqueness of $\widehat \cF$ under the assumptions that $\widehat\cF$ preserves homotopy cokernels and extends $\cF$ 
along $\Gamma \colon$
consider once again a nullhomotopy in $\Arr(\cA)$
$$\xymatrix{A \ar[r]^{f} \ar[d]_{a} & B \ar[d]^{b} \\ 
A_0 \ar[ru]^{\varphi} \ar[r]_{f_0} & B_0}$$
Following Remark \ref{RemCofCompl}, we can present it as
$$\xymatrix{\Gamma A \ar[rr]^{\Gamma f} \ar[d]_{\Gamma a} \ar@{-->}@/_2.5pc/[dd]_{0}^<<<<<<<<<<<<<<<<<<<{\Longrightarrow}
\ar@{-->}@/_2.5pc/[dd]_{0}^<<<<<<<<<<<<<<<<<{\gamma_{\Gamma a}} & & 
\Gamma B \ar[d]^{\Gamma b}  \ar@{-->}@/^2.5pc/[dd]^{0}_<<<<<<<<<<<<<<<<<<<{\Longleftarrow}
\ar@{-->}@/^2.5pc/[dd]^{0}_<<<<<<<<<<<<<<<<<{\gamma_{\Gamma b}} \\
\Gamma A_0 \ar[rr]_{\Gamma f_0} \ar[d]_{c_{\Gamma a}} \ar[rru]^{\Gamma \varphi} & & \Gamma B_0 \ar[d]^{c_{\Gamma b}} \\
(A,a,A_0) \ar[rr]^{(f,f_0)}  \ar@{-->}@/_1.5pc/[rr]_{0}^{\Uparrow \; \varphi} & & (B,b,B_0)}$$
We have to compare what necessarily is  the image by $\widehat\cF$ of this diagram with the construction depicted in the 
first point of the proof. \\
(i) On objects: the first equality is due to the fact that $\widehat\cF$ preserves homotopy cokernels and the 
second one to the fact that $\widehat\cF$ extends $\cF$ along $\Gamma$
$$\resizebox{\displaywidth}{!}{
\xymatrix{ & \widehat\cF(\Gamma A_0) \ar[rd]^{\widehat\cF(c_{\Gamma a})} \\
\widehat\cF(\Gamma A) \ar[ru]^{\widehat\cF(\Gamma a)} \ar@{-->}[rr]_0 & \ar@{}[u]|{\Uparrow \; \widehat\cF(\gamma_{\Gamma a})} & \widehat\cF(A,a,A_0)}
=
\xymatrix{ & \widehat\cF(\Gamma A_0) \ar[rd]^{c_{\widehat\cF(\Gamma a)}} \\
\widehat\cF(\Gamma A) \ar[ru]^{\widehat\cF(\Gamma a)} \ar@{-->}[rr]_0 & \ar@{}[u]|{\Uparrow \; \gamma_{\widehat\cF(\Gamma a)}} & \cC(\widehat\cF(\Gamma a))}
=
\xymatrix{ & \cF A_0 \ar[rd]^{c_{\cF a}} \\
\cF A \ar[ru]^{\cF a} \ar@{-->}[rr]_0 & \ar@{}[u]|{\Uparrow \; \gamma_{\cF a}} & \cC(\cF a)}}$$
(ii) On arrows: we have to verify that our assumptions of $\widehat\cF$ force the equations
$$c_{\cF a} \cdot \widehat \cF(f,f_0) = \cF f_0 \cdot c_{\cF b} \;\;\; \mbox{ and } \;\;\; 
\gamma_{\cF a} \circ \widehat \cF(f,f_0) = \cF f \circ \gamma_{\cF b}$$ 
From Remark \ref{RemCofCompl}, we know that $c_{\Gamma a} \cdot (f,f_0) = \Gamma f_0 \cdot c_{\Gamma b}$ and
$\gamma_{\Gamma a} \circ (f,f_0) = \Gamma f \circ \gamma_{\Gamma b}.$
Therefore, by applying $\widehat\cF$ and using the conditions of Definition \ref{DefFunctNullHom}, we get
$$c_{Fa} \cdot \widehat\cF(f,f_0) = \widehat\cF(c_{\Gamma a}) \cdot \widehat\cF(f,f_0) = 
\widehat\cF(\Gamma f_0) \cdot \widehat\cF(c_{\Gamma b}) = Ff_0 \cdot c_{Fb}$$
$$\gamma_{Fa} \circ \widehat\cF(f,f_0) = \widehat\cF_{\Gamma a \cdot c_{\Gamma a}}(\gamma_{\Gamma a}) \circ \widehat\cF(f,f_0) =
\widehat\cF_{\Gamma a \cdot c_{\Gamma a} \cdot (f,f_0)}(\gamma_{\Gamma a} \circ (f,f_0)) =$$
$$= \widehat\cF_{\Gamma f \cdot \Gamma b \cdot c_{\Gamma b}}(\Gamma f \circ \gamma_{\Gamma b}) =
\widehat\cF(\Gamma f) \circ \widehat\cF_{\Gamma b \cdot c_{\Gamma b}}(\gamma_{\Gamma b}) = \cF f \circ \gamma_{\cF b}$$
(iii) On nullhomotopies: we have to verify that our assumptions of $\widehat\cF$ force the equation
$$c_{\cF a} \circ \widehat\cF_{(f,f_0)}(\varphi) = \cF \varphi \circ \gamma_{\cF b}$$
From Remark \ref{RemCofCompl}, we know that $c_{\Gamma a} \circ \varphi =  \Gamma \varphi \circ \gamma_{\Gamma b}.$
Therefore, by applying $\widehat\cF$ and using the conditions of Definition \ref{DefFunctNullHom}, we get
$$c_{\cF a} \circ \widehat\cF_{(f,f_0)}(\varphi) = \widehat\cF(c_{\Gamma a}) \circ \widehat\cF_{(f,f_0)}(\varphi) =
\widehat\cF_{c_{\Gamma a} \cdot (f,f_0)}(c_{\Gamma a} \circ \varphi) =$$
$$=\widehat\cF_{\Gamma \varphi \cdot \Gamma b \cdot c_{\Gamma b}}(\Gamma \varphi \circ \gamma_{\Gamma b}) =
\widehat\cF(\Gamma \varphi) \circ \widehat\cF_{\Gamma b \cdot c_{\Gamma b}}(\gamma_{\Gamma b}) = \cF \varphi \circ \gamma_{\cF b}$$
3) $\widehat\cF$ preserves homotopy cokernels: consider a $\Theta_{\Delta}$-cokernel in $\Arr(\cA)$ as in \ref{TextCofArr}
$$\xymatrix{A \ar[rr]^{f} \ar[d]_{a} & & B \ar[d]_<<<{b} \ar[rr]^{a'} & & A_0 +_{a,f} B \ar[d]^{[f_0,b]} \\ 
A_0 \ar[rrrru]_>>>>>>>>>{f'} \ar[rr]_{f_0} & & B_0 \ar[rr]_{\id} & & B_0}$$
and its image by $\widehat\cF$ (the three columns are $\Theta$-cokernels, but I omit from the picture the corresponding structural nullhomotopies
$\gamma_{\cF a}, \gamma_{\cF b}$ and $\gamma_{\cF[f_0,b]}$):
$$\xymatrix{\cF A \ar[d]_{\cF a} \ar[rr]^{\cF f} & & \cF B \ar[d]_<<<{\cF b} \ar[rr]^{\cF a'} & & \cF(A_0+_{a,f}B) \ar[d]^{\cF[f_0,b]} \\
\cF A_0 \ar[d]_{c_{\cF a}} \ar[rr]_{\cF f_0} \ar[rrrru]_>>>>>>>>>>>>>>{\cF f'} & & \cF B_0 \ar[d]_{c_{\cF b}} \ar[rr]_{\id} & & \cF B_0 \ar[d]^{c_{\cF[f_0,b]}} \\
\widehat\cF(A,a,A_0) \ar[rr]^{\widehat\cF(f,f_0)} \ar@{-->}@/_2.0pc/[rrrr]_{0}^{\Uparrow \;\; \widehat\cF f'} 
& & \widehat\cF(B,b,B_0) \ar[rr]^{\widehat\cF(a',\id)} & & \widehat\cF[f_0,b]}$$
We have to prove that the bottom row is a $\Theta$-cokernel. For this, consider a nullhomotopy in $\cB \colon$
 $$\xymatrix{ \widehat\cF(A,a,A_0) \ar[rr]_{\widehat\cF(f,f_0)} \ar@/^1.8pc/@{-->}[rrrr]^{0}_{\varphi \; \Downarrow} & & \widehat\cF(B,b,B_0) \ar[rr]_-{g} & & C }$$
We can construct two nullhomotopies in $\cB$
$$c_{\cF a} \circ \varphi \in \Theta(c_{\cF a} \cdot \widehat\cF(f,f_0) \cdot g) = \Theta(\cF f_0 \cdot c_{\cF b} \cdot g)
\;\;\mbox{ and }\;\;
\gamma_{\cF b} \circ g \in \Theta(\cF b \cdot c_{\cF b} \cdot g)$$
which satisfy the following condition (use Condition \ref{CondRedInter} for the first equality):
$$\cF a \cdot c_{\cF a} \circ \varphi = \gamma_{\cF a} \circ \widehat\cF(f,f_0) \cdot g = \cF f \circ \gamma_{\cF b} \circ g$$
Since, by assumption, the image by $\cF$ of a pushout is a $\Theta$-strong pushout, we can apply Remark \ref{RemColimNull}.2
and we get a unique nullhomotopy $\bar\varphi \in \Theta(\cF[f_0,b] \cdot c_{\cF b} \cdot g)$ such that $\cF f' \circ \bar\varphi = c_{\cF a} \circ \varphi$
and $\cF a' \circ \bar\varphi = \gamma_{\cF b} \circ g.$ Now, the existence of $\bar\varphi$ combined with the universal property of the
$\Theta$-cokernel $\widehat\cF[f_0,b]$ gives a unique arrow $g' \colon \widehat\cF[f_0,b] \to C$ such that $c_{\cF[f_0,b]} \cdot g' = c_{\cF b} \cdot g$
and $\gamma_{\cF[f_0,b]} \circ g' = \bar\varphi.$ We have to prove that the arrow $g'$ is the required factorization of $(g,\varphi)$ through 
$(\widehat\cF(a',\id),\widehat\cF f'),$ that is, $\widehat\cF(a',\id) \cdot g' = g$ and $\widehat\cF f' \circ g' = \varphi.$ We use, for both equations, 
Remark \ref{RemNofCof}.4. For the first one, precompose with $c_{\cF b}$ and $\gamma_{\cF b} \colon$
$$c_{\cF b} \cdot g = c_{\cF[f_0,b]} \cdot g' = c_{\cF b} \cdot \widehat\cF(a',\id) \cdot g'$$
$$\gamma_{\cF b} \circ g = \cF a' \circ \bar\varphi = \cF a' \circ \gamma_{\cF[f_0,b]} \circ g' = \gamma_{\cF b} \circ \widehat\cF(a',\id) \cdot g'$$
For the second one, precompose with $c_{\cF a} \colon$
$$c_{\cF a} \circ \varphi = \cF f' \circ \bar\varphi = \cF f' \circ \gamma_{\cF[f_0,b]} \circ g' = c_{\cF a} \circ \widehat\cF f' \circ g'$$
It remains to prove that the factorization $g'$ is unique. For this, assume that there is an arrow $\bar g \colon \widehat\cF[f_0,b] \to C$ such that
$\widehat\cF(a',\id) \cdot \bar g = g$ and $\widehat\cF f' \circ \bar g = \varphi.$ To prove that $\bar g = g'$ we have to prove that 
$c_{\cF[f_0,b]} \cdot \bar g = c_{\cF b} \cdot g$ and $\gamma_{\cF[f_0,b]} \circ \bar g = \bar\varphi.$ The verification of the first equation is direct:
$$c_{\cF b} \cdot g = c_{\cF b} \cdot \widehat\cF(a',\id) \cdot \bar g = c_{\cF[f_0,b]} \cdot \bar g$$
For the second equation, we go back to the conditions which define $\bar\varphi \colon$
$$\cF f' \circ \gamma_{\cF[f_0,b]} \circ \bar g = c_{\cF a} \circ \widehat\cF f' \circ \bar g = c_{\cF a} \circ \varphi$$
$$\cF a' \circ \gamma_{\cF[f_0,b]} \circ \bar g = \gamma_{\cF b} \circ \widehat\cF(a',\id) \cdot \bar g = \gamma_{\cF b} \circ g$$
4) The image by $\widehat\cF$ of a $\Theta_{\Delta}$-cokernel is a strong $\Theta$-cokernel: consider once again a $\Theta_{\Delta}$-cokernel 
in $\Arr(\cA)$ and its image by $\widehat\cF$ as at the beginning of point 3) of the proof. Consider also a nullhomotopy in $\cB$
$$\xymatrix{ \widehat\cF(B,b,B_0) \ar[rr]_{\widehat\cF(a',\id)} \ar@/^1.8pc/@{-->}[rrrr]^{0}_{\varphi \; \Downarrow} & & \widehat\cF[f_0,b] \ar[rr]_-{g} & & C }$$
and assume that $\varphi$ is compatible with $\widehat\cF f',$ that is, $\widehat\cF f' \circ g = \widehat\cF(f,f_0) \circ \varphi.$ 
We get a new nullhomotopy
$$c_{\cF b} \circ \varphi \in \Theta(c_{\cF b} \cdot \widehat\cF(a',\id) \cdot g) = \Theta(c_{\cF[f_0,b]} \cdot g)$$
In order to prove that $c_{\cF b} \circ \varphi$ is compatible with $\gamma_{\cF[f_0,b]},$ that is, 
$\cF[f_0,b] \cdot c_{\cF b} \circ \varphi = \gamma_{\cF[f_0,b]} \circ g,$
we use Remark \ref{RemColimNull}.3 once again, because $\cF$ sends pushouts to $\Theta$-strong pushouts:
$$\cF f' \cdot \cF[f_0,b] \cdot c_{\cF b} \circ \varphi = \cF f_0 \cdot c_{\cF b} \circ \varphi = c_{\cF a} \cdot \widehat\cF(f,f_0) \circ \varphi 
= c_{\cF a} \circ \widehat\cF f' \circ g = \cF f' \circ \gamma_{\cF[f_0,b]} \circ g$$
$$\cF a' \cdot \cF[f_0,b] \cdot c_{\cF b} \circ \varphi = \cF b \cdot c_{\cF b} \circ \varphi = \gamma_{\cF b} \circ \widehat\cF(a',\id) \cdot g
= \cF a' \circ \gamma_{\cF[f_0,b]} \circ g$$
Now, the universal property of the $\Theta$-cokernel $\widehat\cF[f_0,b]$ gives a unique nullhomotopy $\varphi' \in \Theta(g)$ such that 
$c_{\cF[f_0,b]} \circ \varphi' = c_{\cF b} \circ \varphi.$ We still have to check that $\varphi'$ is the required factorization, that is, 
$\widehat\cF(a',\id) \circ \varphi' = \varphi.$ Thanks to Remark \ref{RemNofCof}.4, it is enough to precompose with $c_{\cF b} \colon$
$$c_{\cF b} \cdot \widehat\cF(a',\id) \circ \varphi' = c_{\cF[f_0,b]} \circ \varphi' = c_{\cF b} \circ \varphi$$
It remains to prove that the factorization $\varphi'$ is unique. For this, assume that there is a nullhomotopy $\bar\varphi \in \Theta(g)$ 
such that $\widehat\cF(a',\id) \circ \bar\varphi = \varphi.$ To prove that $\bar\varphi = \varphi'$ we go back to the condition which defines $\varphi' \colon$
$$c_{\cF[f_0,b]} \circ \bar\varphi = c_{\cF b} \cdot \widehat\cF(a',\id) \circ \bar\varphi = c_{\cF b} \circ \varphi$$
5) $\widehat\cF$ extends $\cF$ along $\Gamma \colon$ by applying $\Gamma$ to an arrow $f \colon X \to Y$ in $\cA,$ we get
$$\xymatrix{\emptyset \ar[r]^{\id} \ar[d]_{\emptyset_X} & \emptyset \ar[d]^{\emptyset_Y} \\ X \ar[r]_{f} & Y}$$
and we have to compare the two diagrams hereunder, the first one giving the image of $\Gamma f$ by $\widehat\cF.$ If we can prove 
that the second one satisfies the conditions defining the first one, we can conclude that $\Gamma \cdot \widehat\cF = \cF.$
$$\xymatrix{ & \cF \emptyset \ar[rr]^{\id} \ar[d]^{\cF \emptyset_X} \ar@{-->}@/_3.2pc/[dd]_{0} & & \cF \emptyset \ar[d]_{\cF \emptyset_Y} \ar@{-->}@/^3.2pc/[dd]^{0} \\
\ar@{}[r]^{\gamma_{\cF \emptyset_X}}|{\Longrightarrow} & \cF X \ar[rr]^{\cF f} \ar[d]^{c_{\cF \emptyset_X}} & & 
\cF Y \ar[d]_{c_{\cF \emptyset_Y}}\ar@{}[r]^{\gamma_{\cF \emptyset_Y}}|{\Longleftarrow} & \\
 & \widehat\cF(\Gamma X) \ar[rr]_{\widehat\cF(\Gamma f)} & & \widehat\cF(\Gamma Y)}
 \;\;\;\;\;
 \xymatrix{ & \emptyset \ar[rr]^{\id} \ar[d]^{\emptyset_{\cF X}} \ar@{-->}@/_3.0pc/[dd]_{0} & & \emptyset \ar[d]_{\emptyset_{\cF Y}} \ar@{-->}@/^3.0pc/[dd]^{0} \\
\ar@{}[r]^{\gamma_{\cF X}}|{\Longrightarrow} & \cF X \ar[rr]^{\cF f} \ar[d]^{\id} & & 
\cF Y \ar[d]_{\id}\ar@{}[r]^{\gamma_{\cF Y}}|{\Longleftarrow} & \\
 & \cF X \ar[rr]_{\cF f} & & \cF Y}$$
Since, by assumption, $\cF$ sends the initial object of $\cA$ into a $\Theta$-strong initial object in $\cB,$ we can use 
Example \ref{ExTrivCof} and the columns of the second diagram are $\Theta$-cokernels. The equation $\id \cdot \cF f = \cF f \cdot \id$ is trivial.
Finally, the equation $\gamma_{\cF X} \circ \cF f = \id \circ \gamma_{\cF Y}$ follows once again from the fact that the initial 
object in $\cB$ is $\Theta$-strong. \\
6) $\widehat\cF$ preserves finite colimits: the preservation of the initial object follows from $\Gamma \cdot \widehat\cF \simeq \cF$ 
because both $\Gamma$ and $\cF$ preserve the initial. Consider now a pushout in $\Arr(\cA)$ (see the proof of Proposition 
\ref{PropEnrLimArr}) and its image by $\widehat\cF$ (I have omitted from the picture the structural nullhomotopies of the four columns, 
which are $\Theta$-cokernels): 
$$\xymatrix{& \cF A \ar[rr]^{\cF g} \ar[dd]^<<<<<<{\cF a} \ar[ld]_{\cF f} & & \cF C \ar[ld]_{\cF f'} \ar[dd]^{\cF c} \\
\cF B \ar[rr]^>>>>>>>>>>>>>>>>>>{\cF g'} \ar[dd]_{\cF b} & & \cF(B+_{f,g}C) \ar[dd]^<<<<<<{\cF(b+c)} \\
& \cF A_0 \ar[rr]^<<<<<<<<<<<<<<<<<<<<{\cF g_0} \ar[ld]_{\cF f_0} \ar[dd]_<<<<<<<{c_{\cF a}} & & \cF C_0 \ar[ld]^{\cF f_0'} \ar[dd]^{c_{\cF c}} \\
\cF B_0 \ar[rr]^>>>>>>>>>>>>>>>>{\cF g_0'} \ar[dd]_{c_{\cF b}} & & \cF(B_0+_{f_0,g_0}C_0) \ar[dd]^<<<<<<<{c_{\cF(b+c)}} \\
& \widehat\cF(A,a,A_0) \ar[rr]^<<<<<<<<<<<<<<<<{\widehat\cF(g,g_0)} \ar[ld]_{\widehat\cF(f,f_0)} 
& & \widehat\cF(C,c,C_0) \ar[ld]^<<<<<<<<{\widehat\cF(f',f_0')} \\
\widehat\cF(B,b,B_0) \ar[rr]_-{\widehat\cF(g',g_0')} & & \widehat\cF(B+_{f,g}C,b+c,B_0+_{f_0,g_0}C_0)}$$
We have to prove that the ground floor is a pushout in $\cB$ and we know, by assumption on $\cF,$ that the first and the second floor
are $\Theta$-strong pushouts. For this, consider two arrows
$$h \colon \widehat\cF(B,b,B_0) \to X \leftarrow \widehat\cF(C,c,C_0) \colon k$$
such that $\widehat\cF(f,f_0) \cdot h = \widehat\cF(g,g_0) \cdot k.$ Therefore
$$\cF f_0 \cdot c_{\cF b} \cdot h = c_{\cF a} \cdot \widehat\cF(f,f_0) \cdot h = 
c_{\cF a} \cdot \widehat\cF(g,g_0) \cdot k = \cF g_0 \cdot c_{\cF c} \cdot k$$
so that there exists a unique arrow $x \colon \cF(B_0+_{f_0,g_0}C_0) \to X$ such that $\cF g_0' \cdot x = c_{\cF b} \cdot h$ and 
$\cF f_0' \cdot x = c_{\cF c} \cdot k.$ We can now costrcut two nullhomotopies
$$\gamma_{\cF b} \circ h \in \Theta(\cF b \cdot c_{\cF b} \cdot h) = \Theta(\cF b \cdot \cF g_0' \cdot x) = \Theta(\cF g' \cdot \cF(b+c) \cdot x)$$
$$\gamma_{\cF c} \circ k \in \Theta(\cF c \cdot c_{\cF c} \cdot k) = \Theta(\cF c \cdot \cF f_0' \cdot x) = \Theta(\cF f' \cdot \cF(b+c) \cdot x)$$
which are compatible, indeed
$$\cF f \circ \gamma_{\cF b} \circ h = \gamma_{\cF a} \circ \widehat\cF(f,f_0) \cdot h = 
\gamma_{\cF a} \circ \widehat\cF(g,g_0) \cdot k = \cF g \circ \gamma_{\cF c} \circ k$$
Since the pushout $\cF(B+_{f,g}C)$ is $\Theta$-strong, we get a unique nullhomotopy $\psi \in \Theta(\cF(b+c) \cdot x)$ such that
$\cF g' \circ \psi = \gamma_{\cF b} \circ h$ and $\cF f' \circ \psi = \gamma_{\cF c} \circ k.$ By the universal property of the $\Theta$-cokernel, 
the nullhomotopy $\psi$ produces a unique arrow 
$$x' \colon \widehat\cF(B+_{f,g}C,b+c,B_0+_{f_0,g_0}C_0) \to X$$ 
such that $c_{\cF(b+c)} \cdot x' = x$ and $\gamma_{\cF(b+c)} \circ x' = \psi.$ We have to prove that $x'$ is the required factorization, that is,
$\widehat\cF(g',g_0') \cdot x' = h$ and $\widehat\cF(f',f_0') \cdot x' = k.$ We check the first condition (the second one is similar) using 
Remark \ref{RemNofCof}.4:
$$c_{\cF b} \cdot \widehat\cF(g',g_0') \cdot x' = \cF g_0' \cdot c_{\cF(b+c)} \cdot x' = \cF g_0' \cdot x = c_{\cF b} \cdot h$$
$$\gamma_{\cF b} \circ \widehat\cF(g',g_0') \cdot x' = \cF g' \circ \gamma_{\cF(b+c)} \circ x' = \cF g'  \circ \psi = \gamma_{\cF b} \circ h$$
It remains to prove that the factorization $x'$ is unique. For this, let 
$$\bar x \colon \widehat\cF(B+_{f,g}C,b+c,B_0+_{f_0,g_0}C_0) \to X$$ 
be an arrow such that $\widehat\cF(g',g_0') \cdot \bar x = h$ and $\widehat\cF(f',f_0') \cdot \bar x = k.$ In order to prove that $\bar x = x',$ 
we have to prove that $c_{\cF(b+c)} \cdot \bar x = x$ and $\gamma_{\cF(b+c)} \circ \bar x = \psi.$ For the first equation, we check the conditions 
which define $x \colon$
$$\cF g_0' \cdot c_{\cF(b+c)} \cdot \bar x = c_{\cF b} \cdot \widehat\cF(g',g_0') \cdot \bar x = c_{\cF b} \cdot h$$
$$\cF f_0' \cdot c_{\cF(b+c)} \cdot \bar x = c_{\cF c} \cdot \widehat\cF(f',f_0') \cdot \bar x = c_{\cF c} \cdot k$$
For the second equation, we check the conditions which define $\psi \colon$ 
$$\cF g' \circ \gamma_{\cF(b+c)} \circ \bar x = \gamma_{\cF b} \circ \widehat\cF(g',g_0') \cdot \bar x = \gamma_{\cF b} \circ h$$
$$\cF f' \circ \gamma_{\cF(b+c)} \circ \bar x = \gamma_{\cF c} \circ \widehat\cF(f',f_0') \cdot \bar x = \gamma_{\cF c} \circ k$$
7) The image by $\widehat\cF$ of finite colimits are $\Theta$-strong finite colimits: the case of the initial object is clear, so we pass to pushouts. 
We keep the same notations as in point 6). We have to prove that the pushout in $\cB$
$$\xymatrix{\widehat\cF(A,a,A_0) \ar[rrr]^{\widehat\cF(g,g_0)} \ar[d]_{\widehat\cF(f,f_0)} & & & \widehat\cF(C,c,C_0) \ar[d]^{\widehat\cF(f',f_0')} \\
\widehat\cF(B,b,B_0) \ar[rrr]_-{\widehat\cF(g',g_0')} & & & \widehat\cF(B+_{f,g}C,b+c,B_0+_{f_0,g_0}C_0)}$$
is $\Theta$-strong. For this, consider two nullhomotopies $\alpha \in \Theta(h)$ and $\beta \in \Theta(k)$ such that
$\widehat\cF(f,f_0) \circ \alpha = \widehat\cF(g,g_0) \circ \beta.$ It follows that
$$\cF f_0 \cdot c_{\cF b} \circ \alpha = c_{\cF a} \widehat\cF(f,f_0) \circ \alpha = 
c_{\cF a} \cdot \widehat\cF(g,g_0) \circ \beta = \cF g_0 \cdot c_{\cF c} \circ \beta$$
Since the pushout $\cF(B_0+_{f_0,g_0}C_0)$ is $\Theta$-strong, we get a unique nullhomotopy 
$[\alpha,\beta] \in \Theta(x)$ such that $\cF g_0' \circ [\alpha,\beta] = c_{\cF b} \circ \alpha$ and $\cF f_0' \circ [\alpha,\beta] = c_{\cF c} \circ \beta.$
Let us check that $\gamma_{\cF(b+c)} \circ x' = \cF(b+c) \circ [\alpha,\beta] \colon$ since the pushout $\cF(B+_{f,g}C)$ is $\Theta$-strong , 
we can use Remark \ref{RemColimNull}.3 and precompose with $\cF g'$ and $\cF f' \colon$
$$\cF g' \circ \gamma_{\cF(b+c)} \circ x' = \cF g' \circ \psi = \gamma_{\cF b} \circ h = 
\cF b \cdot c_{\cF b} \circ \alpha = \cF b \cdot \cF g_0' \circ [\alpha,\beta] = \cF g' \cdot \cF(b+c) \circ [\alpha,\beta]$$
$$\cF f' \circ \gamma_{\cF(b+c)} \circ x' = \cF f' \circ \psi = \gamma_{\cF c} \circ k = 
\cF c \cdot c_{\cF c} \circ \beta = \cF c \cdot \cF f_0' \circ [\alpha,\beta] = \cF f' \cdot \cF(b+c) \circ [\alpha,\beta]$$
By the universal property of the $\Theta$-cokernel  $\widehat\cF(B+_{f,g}C,b+c,B_0+_{f_0,g_0}C_0),$ we get a unique nullhomotopy
$\overline{[\alpha,\beta]} \in \Theta(x')$ such that $c_{\cF(b+c)} \circ \overline{[\alpha,\beta]} = [\alpha,\beta].$ We have to verify that
$\overline{[\alpha,\beta]}$ is the required extension, that is, $\widehat\cF(g',g_0') \circ \overline{[\alpha,\beta]} = \alpha$ and 
$\widehat\cF(f',f_0') \circ \overline{[\alpha,\beta]} = \beta.$ We check the first condition (the second one is similar) using
Remark \ref{RemNofCof}.4:
$$c_{\cF b} \cdot \widehat\cF(g',g_0') \circ \overline{[\alpha,\beta]} = \cF g_0' \cdot c_{\cF(b+c)} \circ \overline{[\alpha,\beta]} = 
\cF g_0' \circ [\alpha,\beta] = c_{\cF b} \circ \alpha$$
It remains to prove that the extension $\overline{[\alpha,\beta]}$ is unique. For this, let $\psi \in \Theta(x')$ be a nullhomotopy
such that $\widehat\cF(g',g_0') \circ \psi = \alpha$ and $\widehat\cF(f',f_0') \circ \psi = \beta.$ To show that $\psi = \overline{[\alpha,\beta]}$ 
it suffices to show that $c_{\cF(b+c)} \circ \psi = [\alpha,\beta].$ For this, we apply once again Remark \ref{RemColimNull}.3 to the
$\Theta$-strong pushout $\cF(B_0+_{f_0;g_0}C_0) \colon$
$$\cF g_0' \cdot c_{\cF(b+c)} \circ \psi = c_{\cF b} \cdot \widehat\cF(g',g_0') \circ \psi = c_{\cF b} \circ \alpha = \cF g_0' \circ [\alpha,\beta]$$
$$\cF f_0' \cdot c_{\cF(b+c)} \circ \psi = c_{\cF c} \cdot \widehat\cF(f',f_0') \circ \psi = c_{\cF c} \circ \beta = \cF f_0' \circ [\alpha,\beta]$$
The proof is now complete. 
\end{proof}

%

\begin{Text}\label{TextPropEquiv}{\rm
We restate now Proposition \ref{PropExt} in terms of an equivalence between hom-categories. 
Consider a category $\cA$ with finite colimits and a category with nullhomotopies $(\cB,\Theta)$ 
satisfying the reduced interchange. Assume that $\cB$ has $\Theta$-strong finite colimits 
and strong $\Theta$-cokernels. We are going to establish an equivalence between the following categories:
\begin{enumerate}
\item[-] $\Colim[\cA,\cB] \colon$ objects are functors preserving finite colimits, arrows are natural transformations,
\item[-] $\HC[\Arr(\cA),\cB] \colon$ objects are those morphisms $(\Arr(\cA),\Theta_{\Delta}) \to (\cB,\Theta)$
of Definition \ref{DefFunctNullHom}.1 which preserve finite colimits and 
homotopy cokernels, arrows are the 2-morphisms of Definition \ref{DefFunctNullHom}.2.
\end{enumerate}
}\end{Text}

\begin{Proposition}\label{PropEquiv}
Under the assumptions and with the notation of \ref{TextPropEquiv}, there is an equivalence of categories
$$\xymatrix{\HC[\Arr(\cA),\cB] \ar@<0.5ex>[rr]^-{\Gamma \cdot (-)} & & \Colim[\cA,\cB] \ar@<0.5ex>[ll]^-{\widehat{(-)}}}$$
\end{Proposition}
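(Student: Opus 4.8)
The plan is to exhibit $\Gamma \cdot (-)$ and $\widehat{(-)}$ as mutually pseudo-inverse functors. First I would check that both land in the stated categories. For $\Gamma \cdot (-)$, if $\cG \in \HC[\Arr(\cA),\cB]$ then $\Gamma \cdot \cG$ preserves finite colimits, since $\Gamma$ does by Proposition \ref{PropEnrLimArr}.3 and $\cG$ does by hypothesis; on arrows the functor is whiskering by $\Gamma$, which sends a $2$-morphism to a natural transformation. For $\widehat{(-)}$, if $\cF \in \Colim[\cA,\cB]$ then conditions (a) and (b) of Proposition \ref{PropExt} hold automatically: $\cB$ has $\Theta$-strong finite colimits and $\cF$ preserves finite colimits, so the image of a finite colimit is a $\Theta$-strong finite colimit, while $\cB$ has strong $\Theta$-cokernels, so every $\cF f$ has one. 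Thus $\widehat\cF$ exists, preserves homotopy cokernels and, by the last clause of Proposition \ref{PropExt}, finite colimits, so $\widehat\cF \in \HC[\Arr(\cA),\cB]$.

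The new ingredient is the action of $\widehat{(-)}$ on morphisms. Given $\beta \colon \cF \Rightarrow \cF'$ in $\Colim[\cA,\cB]$ and an object $(A,a,A_0)$, the naturality square of $\beta$ at $a$ is a commutative square relating $\cF a$ and $\cF' a$, so the functoriality of $\Theta$-cokernels (Remark \ref{RemNofCof}.2) yields a canonical comparison $\widehat\beta_{(A,a,A_0)} \colon \cC(\cF a) \to \cC(\cF' a)$, which I take to be the component of $\widehat\beta$ at $(A,a,A_0)$. Naturality of $\widehat\beta$ with respect to an arrow $(f,f_0)$ follows from the cokernel cancellation of Remark \ref{RemNofCof}.4.(a), since the two composites agree after precomposition with both $c_{\cF a}$ and $\gamma_{\cF a}$. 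The $2$-morphism condition of Definition \ref{DefFunctNullHom}.2 equates two nullhomotopies on a single arrow out of $\cC(\cF a)$; by the strong-cokernel cancellation of Remark \ref{RemNofCof}.4.(b), which is where the reduced interchange (Condition \ref{CondRedInter}) on $(\cB,\Theta)$ enters, it suffices to verify it after precomposition with $c_{\cF a}$, and there it reduces to the defining relations of $\widehat\cF$, $\widehat{\cF'}$ and $\widehat\beta$ together with the naturality of $\beta$. These manipulations are formally the same as the uniqueness computation in step 2) of the proof of Proposition \ref{PropExt}.

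Next I would produce the two natural isomorphisms. The relation $\Gamma \cdot \widehat\cF \simeq \cF$ is furnished by step 5) of the proof of Proposition \ref{PropExt}, where $\widehat\cF(\Gamma X)$ is identified with $\cF X$ through Example \ref{ExTrivCof}; its components are the canonical comparisons of $\Theta$-cokernels, hence natural both in $X$ and in $\cF$, giving a natural isomorphism $\Gamma \cdot \widehat{(-)} \simeq \Id_{\Colim[\cA,\cB]}$. For the reverse composite, fix $\cG \in \HC[\Arr(\cA),\cB]$. Since $\cB$ has strong $\Theta$-cokernels and these are unique up to isomorphism, the hypothesis that $\cG$ preserves homotopy cokernels already forces $\cG$ to send them to strong ones (Remark \ref{RemNofCof}.1). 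Thus $\cG$ is a morphism of categories with nullhomotopies that preserves homotopy cokernels and finite colimits and extends $\Gamma \cdot \cG$ along $\Gamma$; the essential uniqueness clause of Proposition \ref{PropExt} then gives an isomorphism $\widehat{\Gamma \cdot \cG} \simeq \cG$, and a cancellation argument as above shows it is natural in $\cG$, so $\widehat{\Gamma \cdot (-)} \simeq \Id_{\HC[\Arr(\cA),\cB]}$.

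These two natural isomorphisms display $\Gamma \cdot (-)$ and $\widehat{(-)}$ as pseudo-inverse, which is precisely an equivalence of categories. I expect the main obstacle to be the treatment of $\widehat{(-)}$ on $2$-morphisms: the construction of the comparison maps $\widehat\beta$ through the universal property of the cokernels and, above all, the verification of the $2$-morphism compatibility, where every identity of nullhomotopies must be reduced, via Remark \ref{RemNofCof}.4, to a precomposition with the cokernel projection $c_{\cF a}$, the reduced interchange on $(\cB,\Theta)$ being indispensable there. The naturality of the two isomorphisms is routine once the same cancellation principle is in hand.
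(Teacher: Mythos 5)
Your proposal is correct, and for most of its length it coincides with the paper's own proof: the well-definedness of the two functors, the definition of $\widehat{(-)}$ on natural transformations as the canonical comparison of $\Theta$-cokernels (Remark \ref{RemNofCof}.2), the verification of naturality by the cancellation property \ref{RemNofCof}.4.(a), the verification of the 2-morphism condition of Definition \ref{DefFunctNullHom}.2 by the strong cancellation \ref{RemNofCof}.4.(b) (where the reduced interchange enters), and the identification $\Gamma \cdot \widehat{(-)} \simeq \Id$ via step 5) of Proposition \ref{PropExt} together with Remark \ref{RemColimNull}.1 --- all of this is exactly what the paper does. The one point where you genuinely diverge is the counit $\widehat{\Gamma \cdot \cG} \simeq \cG$. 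The paper constructs it by hand: since $(A,a,A_0)$ is the $\Theta_{\Delta}$-cokernel of $\Gamma a$ (Remark \ref{RemCofCompl}) and $\cG$ preserves homotopy cokernels, both $\widehat{\Gamma \cdot \cG}(A,a,A_0)$ and $\cG(A,a,A_0)$ are $\Theta$-cokernels of $\cG(\Gamma a)$, so Remark \ref{RemNofCof}.1 gives an invertible comparison $m_{(A,a,A_0)}$, which the paper then checks to be a 2-morphism and natural in $\cG$. You instead invoke the essential-uniqueness clause of Proposition \ref{PropExt}, after the correct (and needed) observation that Remark \ref{RemNofCof}.1 makes the image cokernels of $\cG$ automatically strong, so that $\cG$ falls under that clause. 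This shortcut is legitimate, but be aware of what it does and does not give: essential uniqueness by itself produces an unspecified isomorphism at each $\cG$, and your cancellation argument for naturality in $\cG$ only runs once you fix the isomorphism characterized by the equations $c_{\cG \Gamma a} \cdot m_{(A,a,A_0)} = \cG(\emptyset_A,\id_{A_0})$ and $\gamma_{\cG \Gamma a} \circ m_{(A,a,A_0)} = \cG(\id_A)$; at that point you are unfolding precisely the construction the paper writes out. So the mathematical content of the two arguments is the same; yours is shorter because it outsources the existence of the counit to the uniqueness statement, while the paper's version is self-contained and makes explicit the universal characterization that the naturality check requires.
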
 

\begin{proof}
We are going to prove that the functors $\Gamma \cdot (-)$ and $\widehat{(-)}$ are one the quasi-inverse of the other. \\
1) Definition of $\Gamma \cdot (-) \colon$ by Proposition \ref{PropEnrLimArr}, $\Gamma \colon \cA \to \Arr(\cA)$ preserves finite colimits, so that 
$\Gamma \cdot (-)$ is well-defined on objects. Its definition on arrows is obvious.\\
2) Definition of $\widehat{(-)} \colon$ from Proposition \ref{PropExt}, we already know how $\widehat{(-)}$ is defined on objects. As far as arrows are
concerned, consider a natural trasformation $\lambda \colon \cF \Rightarrow \cG$ in $ \Colim[\cA,\cB]$ and a nullhomotopy 
$$\xymatrix{A \ar[r]^{f} \ar[d]_{a} & B \ar[d]^{b} \\ 
A_0 \ar[ru]^{\varphi} \ar[r]_{f_0} & B_0}$$
in $\Arr(\cA).$ The next diagram describes the construction of $\widehat\lambda \colon \widehat\cF \Rightarrow \widehat\cG$ in $\HC[\Arr(\cA),\cB] \colon$
$$\xymatrix{ & \cF A \ar[rr]^{\lambda_A} \ar[d]^{\cF a} \ar@{-->}@/_3.3pc/[dd]_{0} & & \cG A \ar[d]_{\cG a} \ar@{-->}@/^3.3pc/[dd]^{0} \\
\ar@{}[r]^{\gamma_{\cF a}}|{\Longrightarrow} & \cF A_0 \ar[rr]^{\lambda_{A_0}} \ar[d]^{c_{\cF a}} & & 
\cG A_0 \ar[d]_{c_{\cG a}}\ar@{}[r]^{\gamma_{\cG a}}|{\Longleftarrow} & \\
 & \widehat\cF(A,a,A_0) \ar[rr]_{\widehat\lambda_{(A,a,A_0)}} & & \widehat\cG(A,a,A_0)}$$
In other words, $\widehat\lambda_{(A,a,A_0)}$ is the unique arrow such that 
$c_{\cF a} \cdot \widehat\lambda_{(A,a,A_0)} = \lambda_{A_0} \cdot c_{\cG a}$ and
$\gamma_{\cF a} \circ \widehat\lambda_{(A,a,A_0)} = \lambda_A \circ \gamma_{\cG a}.$ 
To check the naturality of $\widehat\lambda,$ precompose with $c_{\cF a}$ and $\gamma_{\cF a} \colon$
$$c_{\cF a} \cdot \widehat\cF(f,f_0) \cdot \widehat\lambda_{(B,b,B_0)} = \cF f_0 \cdot c_{\cF b} \cdot \widehat\lambda_{(B,b,B_0)} =
\cF f_0 \cdot \lambda_{B_0} \cdot c_{\cG b} =$$
$$= \lambda_{A_0} \cdot \cG f_0 \cdot c_{\cG b} =
\lambda_{A_0} \cdot c_{\cG a} \cdot \widehat\cG(f,f_0) = c_{\cF a} \cdot \widehat\lambda_{(A,a,A_0)} \cdot \widehat\cG(f,f_0)$$
$$\gamma_{\cF a} \circ \widehat\cF(f,f_0) \cdot \widehat\lambda_{(B,b,B_0)} = \cF f \circ \gamma_{\cF b} \circ \widehat\lambda_{(B,b,B_0)} =
\cF f \cdot \lambda_{B_0} \circ \gamma_{\cG b} =$$
$$= \lambda_{A} \cdot \cG f \circ \gamma_{\cG b} =
\lambda_{A} \circ \gamma_{\cG a} \circ \widehat\cG(f,f_0) = \gamma_{\cF a} \circ \widehat\lambda_{(A,a,A_0)} \cdot \widehat\cG(f,f_0)$$
To check that $\widehat\lambda$ is compatible with nullhomotopies in the sense of Definition \ref{DefFunctNullHom}.2, that is, 
$\widehat\lambda_{(A,a,A_0)} \circ \widehat\cG \varphi = \widehat\cF \varphi \circ \widehat\lambda_{(B,b,B_0)},$ precompose with $c_{\cF a} \colon$
$$c_{\cF a} \cdot \widehat\lambda_{(A,a,A_0)} \circ \widehat\cG \varphi = \lambda_{A_0} \cdot c_{\cG a} \circ \widehat\cG \varphi =
\lambda_{A_0} \cdot \cG \varphi \circ \gamma_{\cG b} =$$
$$= \cF \varphi \cdot \lambda_B \circ \gamma_{\cG b} = \cF \varphi \circ \gamma_{\cF b} \circ \widehat\lambda_{(B,b,B_0)} =
c_{\cF a} \circ \widehat\cF \varphi \circ \widehat\lambda_{(B,b,B_0)}$$
3) Composition $\xymatrix{\Colim[\cA,\cB] \ar[r]^-{\widehat{(-)}} & \HC[\Arr(\cA),\cB] \ar[r]^-{\Gamma \cdot (-)} & \Colim[\cA,\cB]} \colon$
from point 5) of the proof of Proposition \ref{PropExt}, we already 
know that $\Gamma \cdot \widehat\cF = \cF$ for any functor $\cF \in \Colim[\cA,\cB].$ Consider now a natural transformation 
$\lambda \colon \cF \Rightarrow \cG$ in $\Colim[\cA,\cB].$ We have to prove that the restriction along $\Gamma$ of $\widehat\lambda$ 
is $\lambda.$ This is because, if we start with an object $X \in \cA,$ the definition of  $\widehat\lambda_{\Gamma X}$ reduces to 
the following diagram (use Remark \ref{RemColimNull}.1):
$$\xymatrix{ & \emptyset \ar[rr]^{\id} \ar[d]^{\emptyset_{\cF X}} \ar@{-->}@/_3.0pc/[dd]_{0} & & \emptyset \ar[d]_{\emptyset_{\cG X}} \ar@{-->}@/^3.0pc/[dd]^{0} \\
\ar@{}[r]^{\gamma_{\cF X}}|{\Longrightarrow} & \cF X \ar[rr]^{\lambda_X} \ar[d]^{\id} & & 
\cG X \ar[d]_{\id}\ar@{}[r]^{\gamma_{\cG X}}|{\Longleftarrow} & \\
 & \cF X \ar[rr]_{\lambda_X} & & \cG X}$$
4) Composition $\xymatrix{\HC[\Arr(\cA),\cB] \ar[r]^-{\Gamma \cdot (-)} & \Colim[\cA,\cB] \ar[r]^-{\widehat{(-)}} & \HC[\Arr(\cA),\cB]} \colon$
we start with the construction, for any functor $ \cM \in \HC[\Arr(\cA),\cB],$ of an invertible 2-morphism 
$m \colon \widehat{\Gamma \cdot  \cM} \to  \cM.$ Its component at $(A,a,A_0) \in \Arr(\cA)$ is depicted in the following diagram:
$$\xymatrix{ & \cM \Gamma A \ar[d]_{ \cM\Gamma a} \ar@{-->}@/_2.0pc/[ldd]_{0} \ar@{-->}@/^2.0pc/[rdd]^{0} \\
\ar@{}[r]|{\Longrightarrow}^{\gamma_{ \cM \Gamma a}} &  \cM \Gamma A_0 \ar[ld]^{c_{ \cM\Gamma a}} \ar[rd]_{ \cM(0_A,\id_{A_0})} 
& \ar@{}[l]|{\Longleftarrow}_{ \cM(\id_A)} \\
\widehat{\Gamma \cdot  \cM}(A,a,A_0) \ar[rr]_{m_{(A,a,A_0)}} & &  \cM(A,a,A_0)}$$
The triangle on the left is a $\Theta$-cokernel by definition of $\widehat{\Gamma \cdot  \cM},$ the triangle on the right is a $\Theta$-cokernel by 
Remark \ref{RemCofCompl} and because $ \cM$ preserves $\Theta_{\Delta}$-cokernels. So, $m_{(A,a,A_0)}$ is the unique arrow such that 
$c_{ \cM\Gamma a} \cdot m_{(A,a,A_0)} =  \cM(0_A,\id_{A_0})$ and $\gamma_{ \cM\Gamma a} \cdot m_{(A,a,A_0)} =  \cM(\id_{A}).$ Moreover, 
$m_{(A,a,A_0)}$ is an isomorphism by Remark \ref{RemNofCof}.1. We have to prove that the family 
$$m = \{m_{(A,a,A_0)} \mid (A,a,A_0) \in \Arr(\cA)\}$$ 
is a 2-morphism in the sense of Definition \ref{DefFunctNullHom}. For this, consider a nullhomotopy
$$\xymatrix{A \ar[r]^{f} \ar[d]_{a} & B \ar[d]^{b} \\ 
A_0 \ar[ru]^{\varphi} \ar[r]_{f_0} & B_0}$$
in $\Arr(\cA).$ To check the naturality, precompose with $c_{ \cM\Gamma a}$ and $\gamma_{ \cM\Gamma a} \colon$
$$c_{ \cM\Gamma a} \cdot m_{(A,a,A_0)} \cdot  \cM(f,f_0) =  \cM(0_A,\id_{A_0}) \cdot  \cM(f,f_0) =  \cM\Gamma f_0 \cdot  \cM(0_B,\id_{B_0}) =$$ 
$$=  \cM\Gamma f_0 \cdot c_{ \cM\Gamma b} \cdot m_{(B,b,B_0)} = c_{ \cM\Gamma a} \cdot \widehat{\Gamma \cdot  \cM}(f,f_0) \cdot m_{(B,b,B_0)}$$
$$\gamma_{ \cM\Gamma a} \circ m_{(A,a,A_0)} \cdot  \cM(f,f_0) =  \cM(\id_A) \circ  \cM(f,f_0) =  \cM\Gamma f \circ  \cM(\id_B) =$$ 
$$=  \cM\Gamma f \circ \gamma_{ \cM\Gamma b} \circ m_{(B,b,B_0)} = \gamma_{ \cM\Gamma a} \circ \widehat{\Gamma \cdot  \cM}(f,f_0) \cdot m_{(B,b,B_0)}$$
To check the compatibility with nullhomotopies, precompose with $c_{ \cM\Gamma a} \colon$ 
$$c_{ \cM\Gamma a} \cdot m_{(A,a,A_0)} \circ  \cM(\lambda) =  \cM(0_A,\id_{A_0}) \circ  \cM(\lambda) =  \cM\Gamma \lambda \circ  \cM(\id_B) =$$
$$=  \cM\Gamma \lambda \circ \gamma_{ \cM\Gamma b} \circ m_{(B,b,B_0)} = c_{ \cM\Gamma a} \circ \widehat{\Gamma \cdot  \cM}(\lambda) \circ m_{(B,b,B_0)}$$
It remains to prove that, if $\mu \colon  \cM \Rightarrow  \cN$ is a 2-morphism in $\HC[\Arr(\cA),\cB],$ then
$$\xymatrix{\widehat{\Gamma \cdot  \cM} \ar[rr]^{m} \ar[d]_{\widehat{\Gamma \cdot \mu}} & &  \cM \ar[d]^{\mu} \\
\widehat{\Gamma \cdot  \cN} \ar[rr]_{n} & &  \cN}$$
commutes. This means that, for any object $(A,a,A_0) \in \Arr(\cA),$ we have to prove that 
$\widehat{\Gamma \cdot \mu}_{(A,a,A_0)} \cdot n_{(A,a,A_0)} = m_{(A,a,A_0)} \cdot \mu_{(A,a,A_0)}.$ By Remark \ref{RemNofCof}.4, it suffices to chek 
this equation by precomposing with $c_{ \cM\Gamma a}$ and $\gamma_{ \cM\Gamma a} \colon$
$$c_{ \cM\Gamma a} \cdot \widehat{\Gamma \cdot \mu}_{(A,a,A_0)} \cdot n_{(A,a,A_0)} = \mu_{\Gamma A_0} \cdot c_{ \cN\Gamma a} \cdot n_{(A,a,A_0)} =$$
$$= \mu_{\Gamma A_0} \cdot  \cN(0_A,\id_{A_0}) =  \cM(0_A,\id_{A_0}) \cdot \mu_{(A,a,A_0)} = c_{ \cM\Gamma a} \cdot m_{(A,a,A_0)} \cdot \mu_{(A,a,A_0)}$$
$$\gamma_{ \cM\Gamma a} \circ \widehat{\Gamma \cdot \mu}_{(A,a,A_0)} \cdot n_{(A,a,A_0)} = 
\mu_{\Gamma A} \circ \gamma_{ \cN\Gamma a} \circ n_{(A,a,A_0)} =$$
$$= \mu_{\Gamma A} \circ  \cN(\id_A) =  \cM(\id_A) \circ \mu_{(A,a,A_0)} = \gamma_{ \cM\Gamma a} \circ m_{(A,a,A_0)} \cdot \mu_{(A,a,A_0)}$$
The proof is now complete.
\end{proof}

\begin{Text}\label{TextIndHyp}{\rm
To end this section, let us point out that the assumptions on $(\cB,\Theta)$ appearing in \ref{TextPropEquiv} are not independent. Indeed, we know 
from \cite{MMMV} that, if $\cB$ has strong $\Theta$-cokernels of identity arrows and $\Theta$-strong pushouts, then it has all the $\Theta$-cokernels 
and  they are strong. Moreover, in the fundamental case where the structure $\Theta$ is induced by a string of adjunction 
 $$\xymatrix{ \cA \ar[rr]|-{\cU} & & \cB \ar@<-1.5ex>[ll]_-{} \ar@<1.5ex>[ll]^-{} }$$ 
with $\cU$ full and faithful and if $\cB$ has pushouts, then pushouts are $\Theta$-strong and $\cB$ has strong $\Theta$-cokernels.
}\end{Text} 

\section{The denormalization functor}\label{SecDenorm}

\begin{Text}\label{TextDual}{\rm
This short final section is completely devoted to illustrate, on a simple but relevant example, the extension 
$\widehat\cF$ of a functor $\cF \colon \cA \to \cB$ appearing in Proposition \ref{PropExt}, as well as the dual 
construction. As far as the dual constriuction 
is concerned, if we start assuming that $\cA$ has finite limits and we write $\ast$ for the terminal object and 
$\ast^B \colon B \to \ast$ for the unique arrow, the corresponding nullhomotopy structure on $\cA$ is
$\Theta_{\ast}(g) = \{ \varphi \colon \ast \to C \mid \ast^B \cdot \varphi = g \},$
the embedding $\Lambda \colon (\cA,\Theta_{\ast}) \to (\Arr(\cA),\Theta_{\Delta})$ is defined by  
$$\xymatrix{B \ar[r]^{g} \ar[d] & C \\ \ast \ar[ru]_{\varphi}} \;\; \mapsto \;\; 
\xymatrix{B \ar[r]^{g} \ar[d] & C \ar[d] \\ \ast \ar[r] \ar[ru]_{\varphi} & \ast}$$
and the extension along $\Lambda$ of a functor $\cF \colon \cA \to \cB$ is denoted by 
$\widetilde\cF \colon \Arr(\cA) \to \cB.$
}\end{Text} 

\begin{Text}\label{TextReflGr}{\rm
Starting from any category $\cA,$ we can construct the category $\RG(\cA)$ of reflexive graphs in $\cA.$ 
 Objects and arrows are depicted in the following diagram
 $$\xymatrix{A_1 \ar@<-0.5ex>[d]_{d} \ar@<0.5ex>[d]^{c}  \ar[rr]^{f_1} & & B_1  \ar@<-0.5ex>[d]_{d} \ar@<0.5ex>[d]^{c} \\
 A_0 \ar[rr]_{f_0} \ar@/^1.5pc/[u]^{i} & & B_0 \ar@/^1.5pc/[u]^{i}}$$
with the conditions $i \cdot d = \id = i \cdot c, \; i \cdot f_1 = f_0 \cdot i, \; d \cdot f_0 = f_1 \cdot d, \; c \cdot f_0 = f_1 \cdot c.$ \\
 If we assume that the category $\cA$ has a zero object and kernels, we can construct the so-called normalization functor 
 $\cK \colon \RG(\cA) \to \Arr(\cA)$ defined by
  $$\xymatrix{A_1 \ar@<-0.5ex>[d]_{d} \ar@<0.5ex>[d]^{c}  \ar[rr]^{f_1} & & B_1  \ar@<-0.5ex>[d]_{d} \ar@<0.5ex>[d]^{c} \\
 A_0 \ar[rr]_{f_0} \ar@/^1.5pc/[u]^{i} & & B_0 \ar@/^1.5pc/[u]^{i}}
 \;\;\;\;\;\; \mapsto \;\;\;\;\;\;
 \xymatrix{\Ker(d) \ar[r]^{K(f_1)} \ar[d]_{k_d} & \Ker(d) \ar[d]^{k_d} \\
 A_1 \ar@{.>}[r]^{f_1} \ar[d]_{c} & B_1 \ar[d]^{c} \\
 A_0 \ar[r]_{f_0} & B_0}$$ 
 where $K(f_1)$ is the unique arrow such that $K(f_1) \cdot k_d = k_d \cdot f_1.$
 A structure of nullhomotopies $\Theta$ on $\RG(\cA)$  can be chosen in such a way that $\cK$ is a morphism of categories 
 with nullhomotopies and it is bijective on nullhomotopies. Explicitly, a nullhomotopy on an arrow $(f_1,f_0)$ is an 
 arrow $\varphi \colon A_0 \to B_1$ such that $\varphi \cdot d = 0, \; \varphi \cdot c = f_0, \; k_d \cdot c \cdot \varphi = K(f_1) \cdot k_d.$ 
 }\end{Text} 
 
 \begin{Text}\label{TextPreDen}{\rm
 Now we construct two functors from $\cA$ to $\RG(\cA).$ The first one needs no assumption on $\cA.$
 For the second one, the existence of a zero object 0 is needed. Here they are:
 \begin{enumerate}
 \item[] $\Gamma' \colon \cA \to \RG(\cA) \;\;\;\;\;\;\;\;\;
\Gamma'(\xymatrix{ B_0 \ar[r]^{g_0} & C_0} ) = 
\xymatrix{& B_0 \ar@<-0.5ex>[d]_{\id} \ar@<0.5ex>[d]^{\id}  \ar[rr]^{g_0} & & C_0  \ar@<-0.5ex>[d]_{\id} \ar@<0.5ex>[d]^{\id} \\
& B_0 \ar[rr]_{g_0} \ar@/^1.5pc/[u]^{\id} & & C_0 \ar@/^1.5pc/[u]^{\id}}$
\item[] $\Lambda' \colon \cA \to \RG(\cA) \;\;\;\;\;\;\;\;\;
\Lambda'(\xymatrix{B \ar[r]^{g} & C}) = 
\xymatrix{& B \ar@<-0.5ex>[d]  \ar@<0.5ex>[d]  \ar[rr]^{g} & & C  \ar@<-0.5ex>[d] \ar@<0.5ex>[d] \\
& 0 \ar[rr] \ar@/^1.5pc/[u] & & 0 \ar@/^1.5pc/[u]}$
\end{enumerate} 
}\end{Text}

\begin{Text}\label{TextCof=Nof}{\rm
Assume now that $\cA$ is additive. 
The main point is to observe that the images by $\Gamma'$ and $\Lambda'$ of any arrow $a \colon A \to A_0$ of $\cA$ have, respectively, 
a $\Theta$-cokernel and a $\Theta$-kernel in $\RG(\cA).$ Moreover, the $\Theta$-cokernel of $\Gamma'(a)$ coincide with the $\Theta$-kernel 
of $\Lambda'(a).$ All this is depicted in the following diagram, where the dotted arrows are the structural nullhomotopies of the $\Theta$-cokernel 
(the one on the left) and of the $\Theta$-kernel (the one on the right):
$$\resizebox{\displaywidth}{!}{
\xymatrix{& A \ar@<-0.5ex>[d]_{\id} \ar@<0.5ex>[d]^{\id}  \ar[rr]^{a} & & A_0  \ar@<-0.5ex>[d]_{\id} \ar@<0.5ex>[d]^{\id} \ar[rr]^{i_1}
& & A_0 \oplus A \ar@<-0.5ex>[d]_{\pi_1} \ar@<0.5ex>[d]^{[\id;a]} \ar[rr]^{\pi_2} & & A \ar@<-0.5ex>[d] \ar@<0.5ex>[d] \ar[rr]^{a} 
& & A_0  \ar@<-0.5ex>[d] \ar@<0.5ex>[d] \\
& A \ar[rr]_{a} \ar@/^1.5pc/[u]^{\id} \ar@{.>}[rrrru]^<<<<<<<<<{i_2} & & A_0 \ar@/^1.5pc/[u]^{\id} \ar[rr]_{\id} 
& & A_0 \ar@/^1.5pc/[u]^{i_1} \ar[rr] \ar@{.>}[rrrru]^<<<<<<<<<<{\id} & & 0 \ar@/^1.5pc/[u] \ar[rr] & & 0 \ar@/^1.5pc/[u] }}$$
We can therefore extend $\Gamma'$ along $\Gamma$ and $\Lambda'$ along $\Lambda,$ as explained in the proof of Proposition 
\ref{PropExt}. In both cases, we get the so-called denormalization functor 
$$\cD \colon \Arr(\cA) \to \RG(\cA)$$ 
which sends an object $(A,a,A_0)$ on the reflexive graph in the middle of the previous diagram. It is well-known that $\cD$ is 
an equivalence of categories (with nullhomotopies) whose quasi-inverse is the normalization functor $\cK$ of \ref{TextReflGr}.
To prove this fact,
it is enough to check the following isomorphism between a reflexive graph and the denormalization of its normalization:
 $$\xymatrix{A_1 \ar@<-0.5ex>[d]_{d} \ar@<0.5ex>[d]^{c}  \ar@<0.5ex>[rrr]^-{\langle d;\delta \rangle} 
 & & & A_0 \oplus \Ker(d) \ar@<0.5ex>[lll]^-{[i;k_d]} \ar@<-0.5ex>[d]_{\pi_1} \ar@<0.5ex>[d]^{[\id;k_d \cdot c]} \\
 A_0 \ar@<0.5ex>[rrr]^{\id} \ar@/^1.5pc/[u]^{i} & & & A_0 \ar@<0.5ex>[lll]^{\id} \ar@/^1.5pc/[u]^{i_1} }$$
where $\delta \colon A_1 \to \Ker(d)$ is the unique arrow such that $\delta \cdot k_d = -d \cdot i + \id.$ 
}\end{Text}

\begin{Text}\label{Text2catStr}{\rm
Finally, we know that $\RG(\cA)$ is isomorphic to $\Grpd(\cA),$ the category of internal groupoids, because there 
exists a unique composition on the reflexive graphe $\cD(A,a,A_0)$ making it an internal category. It is given by
$$\id \oplus \nabla \colon A_0 \oplus A \oplus A \to A_0 \oplus A$$
(see \cite{AC, PTJ} for a detailed discussion). Transporting the 2-categorical structure of $\Grpd(\cA)$ along 
$\Grpd(\cA) \simeq \RG(\cA) \simeq \Arr(\cA),$ we get a 2-categorical structure on $\Arr(\cA)$ which extends
the structure of nullhomotopies $\Theta_{\Delta} \colon$ for any arrow $(f,f_0) \colon (A,a,A_0) \to (B,b,B_0),$
the set of nullhomotopies $\Theta_{\Delta}((f,f_0))$ coincides with the set of 2-cells from the zero arrow 
$(0^A_B, 0^{A_0}_{B_0})$ to $(f,f_0).$
 }\end{Text}

\end{document}